	\def\@cite#1#2{[\textbf{#1}\if@tempswa , #2\fi]}	
	\def\@biblabel#1{[#1]}								
\newtheorem {theorem}{Theorem}[section]
\newtheorem {proposition}[theorem]{Proposition}
\newtheorem {lemma}[theorem]{Lemma}
\theoremstyle{definition}
\newtheorem{definition}[theorem]{Definition}
\newtheorem {remark}[theorem]{Remark}
\newtheorem {example}[theorem]{Example}
\newcommand{\EE}{\mathbb{E}}
\newcommand{\HH}{\mathbb{H}}
\newcommand{\NN}{\mathbb{N}}
\newcommand{\PP}{\mathbb{P}}
\newcommand{\RR}{\mathbb{R}}
\renewcommand{\SS}{\mathbb{S}}
\newcommand{\cH}{\mathcal{H}}
\DeclareMathOperator{\Var}{Var}
\DeclareMathOperator{\Wass}{Wass}
\DeclareMathOperator{\Kol}{Kol}
\DeclareMathOperator{\Hyp}{Hyp}
\DeclareMathOperator{\cum}{cum}
\DeclareMathOperator{\arcosh}{arcosh}
\newcommand{\bo}{\mathbf{o}}
\newcommand{\B}{B}
\newcommand{\eps}{\varepsilon}
\newcommand{\dint}{\mathrm{d}}
\newcommand{\id}{\mathbbm 1}
\DeclareMathSymbol{\widetildesym}{\mathord}{largesymbols}{"65}
\begin{document}

\title{\bfseries Fluctuations of $\lambda$-geodesic\\ Poisson hyperplanes in hyperbolic space}

\author{%
    Zakhar Kabluchko\footnotemark[1]%
    \and Daniel Rosen\footnotemark[2]%
    \and Christoph Th\"ale\footnotemark[3]%
}

\date{}
\renewcommand{\thefootnote}{\fnsymbol{footnote}}
\footnotetext[1]{%
    University of M\"unster, Germany. Email: zakhar.kabluchko@uni-muenster.de
}

\footnotetext[2]{%
	Ruhr University Bochum, Germany. Email: daniel.rosen@rub.de
}	

\footnotetext[3]{%
    Ruhr University Bochum, Germany. Email: christoph.thaele@rub.de
}

\maketitle

\begin{abstract}\noindent
Poisson processes of so-called $\lambda$-geodesic hyperplanes in $d$-dimensional hyperbolic space are studied for $0\leq\lambda\leq 1$. The case $\lambda=0$ corresponds to genuine geodesic hyperplanes, the case $\lambda=1$ to horospheres and $\lambda\in(0,1)$ to $\lambda$-equidistants. In the focus are the fluctuations of the centred and normalized total surface area of the union of all $\lambda$-geodesic hyperplanes in the Poisson process within a hyperbolic ball of radius $R$ centred at some fixed point, as $R\to\infty$. It is shown that for $\lambda<1$ these random variables satisfy a quantitative central limit theorem precisely for $d=2$ and $d=3$. The exact form of the non-Gaussian, infinitely divisible limiting distribution is determined for all higher space dimensions $d\geq 4$. The special case $\lambda=1$ is in sharp contrast to this behaviour. In fact, for the total surface area of Poisson processes of horospheres, a non-standard central limit theorem with limiting variance $1/2$ is established for all space dimensions $d\geq 2$. We discuss the analogy between the problem studied here and the Random Energy Model whose partition function exhibits a similar structure of possible limit laws.

    \smallskip\noindent
    \textbf{Keywords.} Central limit theorem, horospheres, hyperbolic stochastic geometry, $\lambda$-geodesic hyperplanes, non-central limit theorem, Poisson hyperplane process, Poisson point process, Random Energy Model.

    \smallskip\noindent
    \textbf{MSC 2010.} Primary  52A55, 60D05; Secondary 60F05, 60G55.
\end{abstract}


\section{Introduction and motivation}

The probabilistic understanding of random geometric systems is the main goal of stochastic geometry. In the past, models for such systems have typically been investigated in Euclidean spaces. However, recently there has been a growing interest in random geometric systems in non-Euclidean spaces, most prominently, the spaces of constant negative curvature $-1$. Examples include the study of random hyperbolic Voronoi tessellations \cite{BenjaminiPaquettePfeffer,HansenMueller,GodlandKabluchkoThaele},  hyperbolic random polytopes \cite{BesauRosenThaele,BesauThaele}, the hyperbolic Boolean model \cite{BenjaminiJonassonSchrammTykesson,Tykesson,TykessonCalka}, hyperbolic Poisson cylinder processes \cite{BromanTykesson} or hyperbolic random geometric graphs \cite{FountoulakisYukich,OwadaYogesh}. Most closely related to the present paper are the investigations in \cite{HeroldHugThaele} about Poisson processes of hyperplanes, that is, totally geodesic $(d-1)$-dimensional submanifolds, in $d$-dimensional hyperbolic space. While first-order properties,{  i.e. expectations}, of geometric functionals associated with such processes are independent of the curvature of the underlying space, it turned out that second-order parameters,{  e.g., variances and correlations,} and also the accompanying central limit theory are rather sensitive to curvature. For example, it has been shown that the total surface content or the number of intersection points of such Poisson hyperplane processes that can be observed in a sequence of growing hyperbolic balls satisfy a central limit theorem only in space dimensions $d=2$ and $d=3$. This is in sharp contrast to the situation in Euclidean space, where a central limit theorem for these geometric quantities holds in all space dimensions, see \cite{Heinrich,LPST,RS}.

\begin{figure}[t]
	\begin{center}
		\includegraphics[width=0.45\columnwidth]{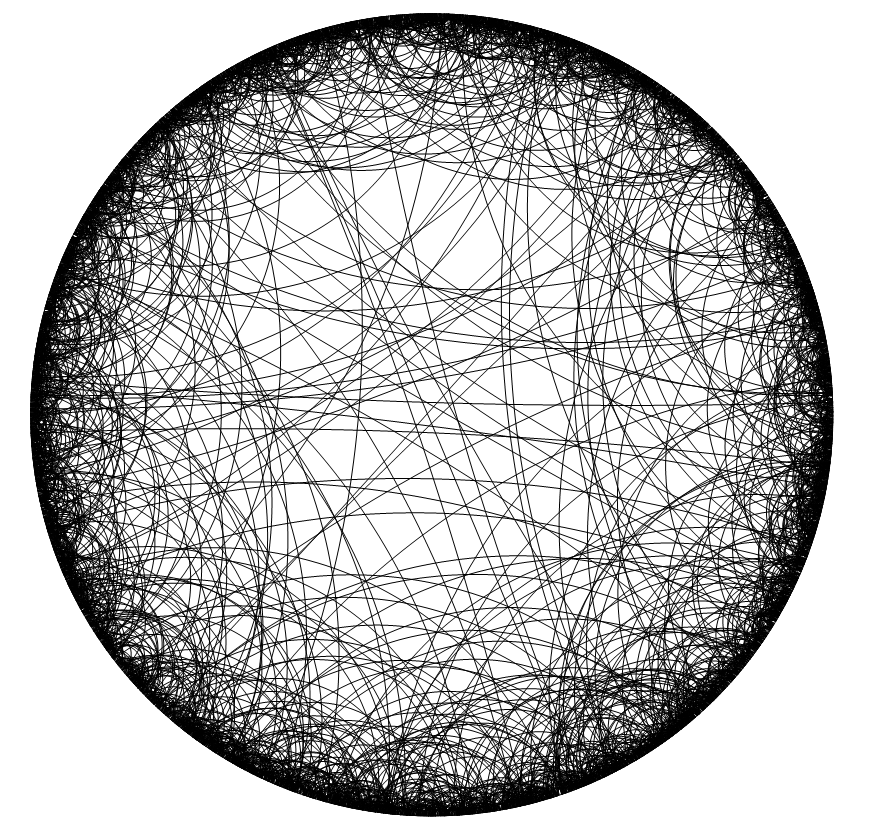}\quad
		\includegraphics[width=0.43\columnwidth]{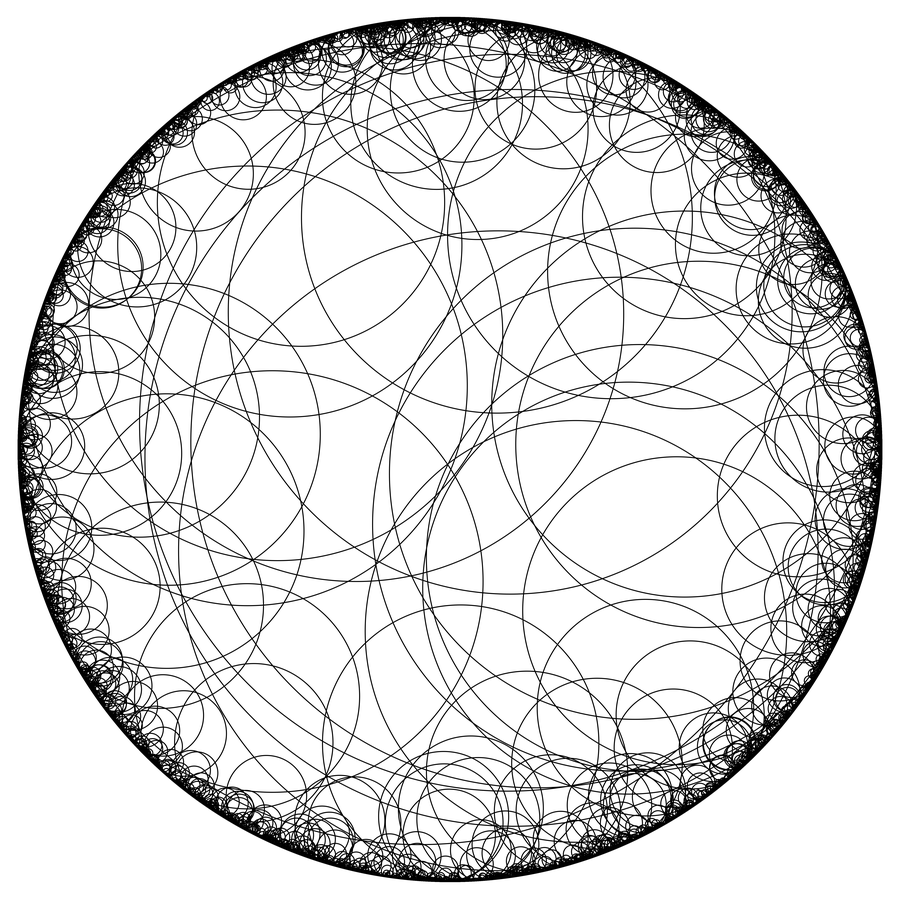}
		\label{fig:Simulations}
		\caption{Left panel: Simulation of a hyperbolic Poisson hyperplane process in the hyperbolic plane (corresponding to the choice $\lambda=0$). Right panel: Simulation of a Poisson process of horospheres in the hyperbolic plane (corresponding to the choice $\lambda=1$).}
	\end{center}
\end{figure}

The purpose of the present paper is twofold. Our first main point is that the notion of a hyperplane in Euclidean space does not have a unique or canonical analogue in hyperbolic space.  Rather, there is
a whole family of such analogues which is parametrized by a real number $0\leq\lambda\leq 1$. Let us briefly explain this (we refer to Section \ref{subsec:lambdaHyperplanes} below for more details and formal definitions). We first recall that a hyperplane in a Euclidean space is not only characterized by being totally geodesic, but also as unbounded totally \textit{umbilic} $(d-1)$-dimensional submanifolds ({i.e. submanifolds whose second fundamental form is everywhere a scalar multiple of the Riemannian metric, see Section \ref{subsec:lambdaHyperplanes} for more details}). However, in hyperbolic geometry, apart from geodesic hyperplanes there are many more unbounded totally umbilic hypersurfaces, {distinguished by their (necessarily constant) normal curvature $\lambda \geq 0 $}: horospheres (or horocycles in dimension $2$) for $\lambda=1$, equidistants for $0<\lambda<1$ and totally geodesic hyperplanes for $\lambda=0$. Collectively, they have been introduced in \cite{Sol} as $\lambda$-geodesic hyperplanes, for $\lambda\in[0,1]$.
To be concrete, in the upper half-plane model for the hyperbolic plane $\HH^2$, horocycles are Euclidean circles tangent to the boundary line or Euclidean lines parallel to it, while  equidistants are realized as intersections with the upper half-plane of Euclidean circles or lines intersecting the boundary line at an angle $\theta$, where $\cos \theta=\lambda\in(0,1)$. Moreover, geodesic lines correspond to infinite rays orthogonal to the boundary line or to Euclidean half-circles intersecting the boundary line orthogonally, see Figure \ref{fig:LambdaGeodesics}. In view of this,  it is natural to consider not only Poisson processes of totally geodesic submanifolds of $\HH^d$ as in \cite{HeroldHugThaele} or \cite{SantaloYanez} for $d=2$, but more generally Poisson processes of $\lambda$-geodesic hyperplanes for all values $\lambda\in[0,1]$.

For such process of $\lambda$-geodesic hyperplanes we shall study the total $(d-1)$-volume in a sequence of growing balls $B_R^d$ centred at some arbitrary but fixed point in $\HH^d$. That is, we consider the sum over all $\lambda$-geodesic hyperplanes of the process of the $(d-1)$-volume of the intersection with $B_R^d$. In particular, we are interested in the fluctuations of this family of random variables as $R\to\infty$, after suitable centring and renormalization. To describe our results, we need to distinguish the cases $\lambda\in[0,1)$ and $\lambda=1$. The reason behind this lies in the fact that the intrinsic geometry of a $\lambda$-geodesic hyperplane is again hyperbolic (with constant sectional curvature equal to $-(1-\lambda^2)$) as long as $\lambda\in[0,1)$, while the intrinsic geometry of horospheres corresponding to the case $\lambda=1$ is Euclidean (with constant sectional curvature equal to $0$). As in \cite{HeroldHugThaele}, we shall prove for $\lambda\in[0,1)$ that, as $R\to\infty$, our target random variables satisfy a central limit theorem only for $d=2$ and $d=3$. More specifically, we develop a quantitative central limit theorem of Berry-Esseen-type.  The second goal of this paper is to characterize for general $\lambda\in[0,1)$ the non-Gaussian limit distribution for all higher space dimensions $d\geq 4$. As it turns out, the limit distribution is infinitely divisible with vanishing Gaussian component.  We will be able to determine explicitly its characteristic function and, in particular, all of its cumulants in terms of the parameter $\lambda$. However, in the special case $\lambda=1$ this approach breaks down and in fact we shall argue that in this case we have a central limit theorem for all space dimensions $d\geq 2$, which in a sense resembles the situation for Poisson hyperplane processes in Euclidean spaces. However, in the hyperbolic set-up, the central limit theorem has a distinguished feature, namely that the limiting variance is not equal to $1$, as could be expected, but is equal to $1/2$. This structure of possible limiting laws (normal, infinitely divisible without Gaussian component, and normal with non-standard variance $1/2$ in the boundary case) is strongly reminiscent of what is known on the fluctuations of the partition function of the Random Energy Model; see~\cite[Theorems~1.5, 1.6]{BovierKurkovaLoewe} and~\cite[Chapter~9]{BovierBook} (in particular, Theorem~9.2.1 there).  This similarity will be discussed in more detail at the end of this paper in Section~\ref{sec:REM}.

{The proofs of the central limit theorems rely on quantitative normal approximation bounds arising from the Malliavin-Stein method \cite{RS,SchulteKolmogorov}, similarly to \cite{HeroldHugThaele}; for the non-central limit theorems we use characteristic function techniques. In both cases, the probabilistic tools are combined with geometric estimates for intersection volumes, which are new in the case $\lambda > 0$.}

\medspace

\begin{figure}[t]
	\begin{center}
		\begin{tikzpicture}
			\node[inner sep=0pt] (russell) at (0,0)
			{\includegraphics[width=0.9\columnwidth]{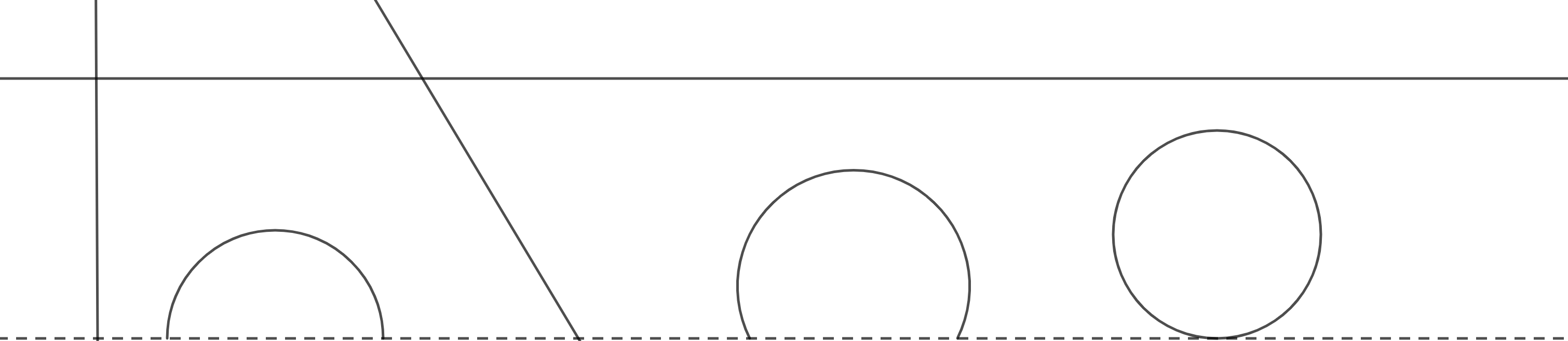}};
			\node at (-6.6,0.3) {(i)};
			\node at (-4.7,-0.3) {(ii)};
			\node at (-2.3,0) {(iii)};
			\node at (0.6,0.25) {(iv)};
			\node at (4,0.1) {(v)};
			\node at (6,0.6) {(vi)};
		\end{tikzpicture}
		\label{fig:LambdaGeodesics}
		\caption{$\lambda$-geodesic lines in the upper half-plane model for a hyperbolic plane: (i) and (ii) correspond to $\lambda=0$ (geodesic lines), (iii) and (iv) to $0<\lambda<1$ (equidistants), (v) and (vi) to $\lambda=1$ (horocycles).}
	\end{center}
\end{figure}

The remaining parts of this paper are organized as follows. In Section \ref{sec:Results} we formally introduce the models we consider and state our results. More precisely, in Section \ref{subsec:Hyperplanes} we determine the limiting distribution in the non-central limit theorem for the total surface area of genuine geodesic hyperplanes and in Section \ref{subsec:lambdaHyperplanes} we introduce general $\lambda$-geodesic hyperplanes and present our results for them. The proof for geodesic hyperplanes is the content of Section \ref{sec:ProofHyperplanes}. In Section \ref{sec:background} we recall some background material about the geometry of $\lambda$-geodesic hyperplanes and derive geometric estimates for them which are needed for our arguments. The proof of our main result on the fluctuations of the total surface area for $\lambda$-geodesic hyperplanes is presented in Section \ref{subsec:ProofLambdad<4} (considering the case $d \leq 3$)
and Section \ref{sec:ProofLambdad>4} (concerning the case $d\geq 4$). The non-standard central limit theorem for horospheres is the content of Section \ref{sec:ProofHoro}. Finally, the analogy with the Random Energy Model is the subject of Section~\ref{sec:REM} which also contains a non-rigorous discussion of all previous results.

\paragraph{Notation and terminology.} Throughout this paper, $\HH^d$ denotes the $d$-dimensional hyperbolic space, that is, the unique complete and simply connected Riemannian manifold of constant sectional curvature  $-1$. Concrete models will be recalled below, as needed. We write $\xrightarrow{D}$ for convergence of random variables in distribution, {and denote by $\cum_\ell(\,\cdot\,)$ the $\ell$-th cumulant of a random variable}. Given two real-valued functions $a(R)$ and $b(R)$, we use the notation $a \sim b$ as $R \to \infty$ to indicate that $\lim_{R\to\infty} \frac{a(R)}{b(R)}  = 1$, while $a \asymp b$ as $R \to \infty$ means that the ratio $\frac{a(R)}{b(R)}$ is bounded between two positive constants for sufficiently large $R$.  {Finally, we shall write $\kappa_d$ and $\omega_d$ for the volume and the surface area of a $d$-dimensional Euclidean unit ball, respectively.}

\section{Models and results}\label{sec:Results}

\subsection{The case of geodesic hyperplanes}\label{subsec:Hyperplanes}

We consider a $d$-dimensional hyperbolic space $\HH^d$ with $d\geq 2$,  and let $\Hyp$ be the space of hyperbolic hyperplanes in $\HH^d$, that is, the space of all $(d-1)$-dimensional totally geodesic {submanifolds} of $\HH^d$. Let $\Lambda$ denote the (infinite) measure on $\Hyp$ which is invariant under all isometries of $\HH^d$. We remark that $\Lambda$ is unique up to a multiplicative constant which will be specified  in what follows.

To describe the measure $\Lambda$,  fix an origin $\bo\in \HH^d$. We parametrize those $H\in\Hyp$ which do not pass through $\bo$ by a pair $(s, u) \in (0,\infty) \times\SS^{d-1}$, where $s\in (0,\infty)$ is the (hyperbolic) distance from $H$ to $\bo$, and $u\in\SS^{d-1}$ is the unit vector ({with respect to the hyperbolic Riemannian metric on the tangent space $T_\bo\HH^d$}) {pointing towards $H$}. The invariant measure $\Lambda$ on $\Hyp$ is then defined by the relation
\begin{equation}\label{eq:dH}
	\int_{\Hyp}f(H)\,\Lambda(\dint H) = {2}\int_{0}^\infty\int_{\SS^{d-1}} f(H(s,u))\,\cosh^{d-1}(s) \,\dint u\,\dint s,
\end{equation}
where $f:\Hyp\to\RR$ is a non-negative measurable function and $H(s,u)$ stands for the unique element of $\Hyp$ parametrized by $(s,u)$ as described above; see \cite[Equation (17.41)]{Santalo} and also the discussion around \cite[Equation (6)]{HeroldHugThaele}. Here, $\dint s$ and $\dint u$ refer to integration with respect to the {Lebesgue measure on $\RR$ and the normalized surface measure on $\SS^{d-1}$, respectively.}

\begin{example}
{The total invariant measure of hyperplanes intersecting a hyperbolic ball of radius $R$ is obtained by setting $f = \id \{s \leq R\}$ in \eqref{eq:dH}:
\begin{equation*}
\Lambda\big(\{H \in \Hyp \,:\, H \cap B_R \neq \emptyset  \}\big) = 2\int_0^R \cosh^{d-1}(s) \,\dint s.
\end{equation*}
In general, the total invariant measures of hyperplanes meeting a convex body with smooth boundary in $\HH^d$ is given as a certain linear combination of its volume and mean curvature integrals of its boundary (see \cite[p. 310]{Santalo}).}
\end{example}

Now, let $\eta$ be a Poisson process on $\Hyp$ with intensity measure $\Lambda$. We are interested in the total surface area
$$
S_R:=\sum_{H\in\eta}\cH^{d-1}(H\cap B_R^d)
$$
of $\eta$ within a hyperbolic ball $B_R^d$ of radius $R>0$ around an arbitrary but fixed point $p\in\HH^d$. Here, $\cH^{d-1}$ stands for the $(d-1)$-dimensional hyperbolic Hausdorff measure. It has been shown in \cite[Theorem 5]{HeroldHugThaele} that the normalized surface area $(S_R-\EE S_R)/\sqrt{\Var S_R}$ satisfies, as $R\to\infty$, a central limit theorem only if $d=2$ or $d=3$, whereas for $d\geq 4$ a potential limit distribution was shown to be necessarily non-Gaussian. However, it remained open in \cite{HeroldHugThaele} to determine this limiting distribution for $d\geq 4$. The first purpose of this paper is to fill this gap. 

\begin{theorem}\label{thm:NonClt}
Suppose that $d\geq 4$. Then,
$$
{S_R-\EE S_R\over e^{R(d-2)}} \overset{D}{\longrightarrow} {\omega_{d-1}\over (d-2)2^{d-2}}\,  Z_d,\qquad\text{as}\;\, R\to\infty,
$$
where $Z_d$ is an infinitely divisible, zero-mean random variable defined by
$$
Z_d:= \lim_{T\to+\infty} \Bigg(\sum_{\substack{s\in\zeta_d\\ s\leq T}} \cosh^{-(d-2)}(s) - \textcolor{black}{2}\sinh T\Bigg)
\quad
\text{ in $L^2$ and a.s.,}
$$
and $\zeta_d$ is an inhomogeneous Poisson process on $[0,\infty)$ with density function $s\mapsto \textcolor{black}{2}\cosh^{d-1}(s)$.
\end{theorem}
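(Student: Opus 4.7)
The plan is to reduce to the one-dimensional Poisson process $\zeta_d$ appearing in the statement, and to show that $(S_R-\EE S_R)/e^{R(d-2)}$ differs from $c_d M_R$ by a remainder that vanishes in $L^2$, where $M_R$ is a martingale converging to $Z_d$. By the rotation invariance of $B_R^d$ about $\bo$, the intersection $H(s,u)\cap B_R^d$ is a hyperbolic $(d-1)$-ball of radius $r=\arcosh(\cosh R/\cosh s)$, so its $(d-1)$-dimensional hyperbolic volume
$$V_R(s):=\omega_{d-1}\int_0^{r}\sinh^{d-2}(t)\,\dint t$$
depends only on $s\in[0,R]$. The elementary inequalities $\sinh t\le e^t/2$ together with $e^r\le 2\cosh r=2\cosh R/\cosh s$ yield both the pointwise asymptotic $V_R(s)/e^{R(d-2)}\to c_d/\cosh^{d-2}(s)$ as $R\to\infty$ (with $c_d:=\omega_{d-1}/((d-2)2^{d-2})$) and the uniform bound $V_R(s)/e^{R(d-2)}\le 2^{d-2}c_d/\cosh^{d-2}(s)$ valid for all $0\le s\le R$. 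Projecting $\eta$ onto its $s$-coordinate via \eqref{eq:dH} produces the inhomogeneous Poisson process $\zeta_d$ on $[0,\infty)$ with density $\cosh^{d-1}(s)$, so that $S_R=\sum_{s\in\zeta_d,\,s\le R}V_R(s)$.

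First I establish the existence of $Z_d$. Setting $M_T:=\sum_{s\in\zeta_d,\,s\le T}\cosh^{-(d-2)}(s)-\sinh T$, the identity $\int_0^T\cosh^{-(d-2)}(s)\cosh^{d-1}(s)\,\dint s=\int_0^T\cosh(s)\,\dint s=\sinh T$ shows that $M_T$ is the compensated Poisson sum, hence a mean-zero right-continuous martingale in $T$ with respect to $\cF_T:=\sigma(\zeta_d\cap[0,T])$. Campbell's variance formula gives
$$\Var(M_T)=\int_0^T\cosh^{-2(d-2)}(s)\cosh^{d-1}(s)\,\dint s=\int_0^T\cosh^{3-d}(s)\,\dint s,$$
which is uniformly bounded in $T$ precisely when $d\ge 4$. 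Doob's $L^2$-martingale convergence theorem then delivers both the $L^2$ and almost-sure limit $Z_d=\lim_{T\to\infty}M_T$; in particular $c_d M_R\to c_d Z_d$ in $L^2$ as $R\to\infty$.

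For the main convergence, a direct algebraic computation gives
$$\frac{S_R-\EE S_R}{e^{R(d-2)}}-c_d M_R=e^{-R(d-2)}\Bigg(\sum_{s\in\zeta_d,\,s\le R}D_R(s)-\int_0^R D_R(s)\cosh^{d-1}(s)\,\dint s\Bigg),$$
where $D_R(s):=V_R(s)-c_d e^{R(d-2)}\cosh^{-(d-2)}(s)$. The right-hand side is a centred Poisson functional, and Campbell's formula identifies its variance as $\int_0^R(D_R(s)/e^{R(d-2)})^2\cosh^{d-1}(s)\,\dint s$. The pointwise asymptotic gives $D_R(s)/e^{R(d-2)}\to 0$ for each fixed $s$, while the uniform bound furnishes a dominating function of order $\cosh^{3-d}(s)$, integrable on $[0,\infty)$ exactly because $d\ge 4$. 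Dominated convergence therefore drives this variance to zero, so $(S_R-\EE S_R)/e^{R(d-2)}-c_d M_R\to 0$ in $L^2$. Combined with the previous paragraph, this yields $(S_R-\EE S_R)/e^{R(d-2)}\to c_d Z_d$ in $L^2$, and hence in distribution.

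The main obstacle is producing the uniform bound $V_R(s)/e^{R(d-2)}\le 2^{d-2}c_d/\cosh^{d-2}(s)$ with the correct $s$-dependence: this single estimate must simultaneously control the variance of $M_T$ (which is where the assumption $d\ge 4$ enters) and dominate the error integrand all the way up to the boundary $s=R$, where the pointwise asymptotic $V_R(s)\sim c_d e^{R(d-2)}\cosh^{-(d-2)}(s)$ degenerates because the intersection ball shrinks to a point.
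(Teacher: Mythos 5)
Your argument is correct, and it takes a genuinely different route from the paper's. The paper works at the level of characteristic functions: it writes $\EE e^{\mathfrak{i}tS_R}$ via the exponential formula for Poisson functionals, proves the pointwise limit $g_R(s)\to c_d\cosh^{-(d-2)}(s)$ (Lemma \ref{lem:constantC}, identical in content to your asymptotic for $V_R$), dominates the integrand by a multiple of $e^{-(d-3)s}$, and then identifies the limiting characteristic function with that of $c_dZ_d$ by a second dominated-convergence argument applied to the truncated sums $Y_T$. You instead couple $S_R$ and $Z_d$ on one probability space through the projected process $\zeta_d$ and show $(S_R-\EE S_R)/e^{R(d-2)}-c_dM_R\to0$ in $L^2$ via Campbell's formula; the same exponent $3-d$ controls both the martingale variance and the error integrand, so the hypothesis $d\geq 4$ enters in exactly the same place as in the paper. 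Your decomposition buys the stronger conclusion of $L^2$-convergence under the natural coupling and avoids characteristic functions entirely; what it does not deliver is the explicit characteristic function of $Z_d$, which the paper uses (at the end of its proof and in Remark \ref{rem:Zd}) to establish the infinite divisibility asserted in the statement and to read off the L\'evy measure. Since $Z_d$ is an $L^2$-limit of compensated Poisson integrals, its infinite divisibility is standard, but you should add a sentence to that effect to cover the full statement; likewise, your pointwise asymptotic for $V_R(s)$ needs the two-sided relations $\sinh t\sim e^t/2$ and $e^r\sim 2\cosh r$ (not just the stated one-sided inequalities), exactly as carried out in Lemma \ref{lem:constantC}.
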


\begin{remark}
\begin{itemize}
\item[(1)]
By \cite[Lemma 11]{HeroldHugThaele} the rescaling $e^{R(d-2)}$ in the previous theorem is of the same order as $\sqrt{\Var S_R}$, as $R\to\infty$, up to a multiplicative constant only depending on $d$.
\item[(2)]
Alternatively, one might consider for $t>0$ a Poisson process $\eta_t$ on the space $\Hyp$ with intensity measure $t\Lambda$, in which case the parameter $t$ has an interpretation as an intensity. Then, for \textit{fixed} radius $R>0$ one might ask for the fluctuations of the total surface area, as $t\to\infty$. In this case a central limit theorem for all space dimensions $d\geq 2$ has been shown in \cite[Theorem 4]{HeroldHugThaele}. {In fact, it holds for all intersection processes and} follows from general results for so-called Poisson U-statistics \cite{LPST,RS}. A similar comment also applies to the case of Poisson processes of $\lambda$-geodesic hyperplanes considered in the next subsection.
\end{itemize}
\end{remark}

\begin{remark}\label{rem:Zd}
	Let us discuss existence and  properties of the random variable $Z_d$ appearing in Theorem~\ref{thm:NonClt}.  For $T>0$, the expectation of the random variable $Y_T := \sum_{\substack{s\in\zeta_d, s\leq T}} \cosh^{-(d-2)}(s)$ is {computed with the help of the Mecke equation (see, e.g., \cite[Theorem 4.1]{LPBook})} and equals $\EE Y_T = \textcolor{black}{2}\int_0^T \cosh s \, \dint s = \textcolor{black}{2}\sinh T$, while its variance is {similarly computed as} 
$$
\Var Y_T = \textcolor{black}{2}\int_{0}^T \cosh^{(d-1) - 2(d-2)}(s) \,\dint s = \textcolor{black}{2}\int_{0}^T \cosh^{3-d}(s) \,\dint s.
$$
Note that the latter integral converges (and stays bounded) as $T\to+\infty$ by the assumption $d\geq 4$.  Therefore, $Z_d := \lim_{T\to +\infty} (Y_T - \EE Y_T)$ exists a.s.\ and in $L^2$ by the martingale convergence theorem for $L^2$-bounded martingales. A slight generalization of the above argument (using, e.g., \cite[Corollary 1]{LPST}) shows that all cumulants (and hence, all moments) of $Y_T - \EE Y_T$ stay bounded as $T\to\infty$ and hence, $Y_T - \EE Y_T \to Z_d$ in $L^p$ for all $p>0$.
The cumulants of the limiting random variable $Z_d$ are given as follows. We have $\cum_1(Z_d)= \EE Z_d = 0$ and
$$
\cum_\ell(Z_d) = \lim_{T\to\infty} \cum_\ell(Y_T- \EE Y_T) =  \textcolor{black}{2}\int_0^\infty \cosh^{-( (d-2)\ell -(d-1))} (s)\,\dint s
=
\textcolor{black}{\sqrt{\pi}}{\Gamma\Big({(d-2)\ell-(d-1)\over 2}\Big)\over\Gamma\Big({(d-2)(\ell-1)\over 2}\Big)},
$$
for all $\ell \geq 2$,
where we used \cite[Corollary 1]{LPST} and then the formula $\int_{-\infty}^\infty (\cosh y)^{-h}\,\dint y=\sqrt{\pi}\,\Gamma(h/2)/ \Gamma((h+1)/2)$ for all $h>0$; see, e.g., \cite[Equation (3.14)]{KabluchkoAngles}.
In particular, $Z_d$ cannot be a Gaussian random variable, which would have vanishing cumulants for $\ell\geq 3$. For example, if $d=4$ we have $\cum_2(Z_4)=\textcolor{black}{\pi} $, $\cum_3(Z_4)=\frac \pi {\textcolor{black}{2}}$ and $\cum_4(Z_4)=\frac {3\pi}{\textcolor{black}{8}}$. Moreover, in the proof of Theorem \ref{thm:NonClt} we show that the random variable $Z_d$ has characteristic function given by
$$
\EE e^{\mathfrak{i} t Z_d}
=
\exp\Big(\textcolor{black}{2}\int_0^\infty(e^{\mathfrak{i}th(s)}-1-\mathfrak{i}th(s))\cosh^{d-1}(s)\,\dint s\Big),\qquad h(s) := \cosh^{-(d-2)}(s),
$$
where $\mathfrak{i}= \sqrt{-1}$ stands for the imaginary unit. Thus, $Z_d$ is infinitely divisible; see \cite[Corollary 7.6]{KallenbergVol1} or~\cite[Theorem~8.1]{Sato_book} for the  L\'evy-Khinchin formula and~\cite[Chapter~4]{Sato_book} for the L\'evy-It\^{o} decomposition. The L\'evy measure $\nu_d$ lives on $(0,1)$ and is the image of the measure $\textcolor{black}{2}\cosh^{d-1}(s)\dint s$ on $(0,\infty)$ under the map $s\mapsto h(s)$. Thus, the Lebesgue density of $\nu_d$ is given by
$$
\frac{\nu_d(\dint y)}{\dint y} = \frac{\textcolor{black}{2} \cosh^{d-1}(h^{-1}(s))}{|h'(h^{-1}(y))|}
=
\frac{\textcolor{black}{2}\cosh^{2d-2} (\arcosh (y^{-\frac 1{d-2}}))}{(d-2)\sinh(\arcosh (y^{-\frac 1{d-2}}))}
=
\frac{\textcolor{black}{2}}{(d-2) y^{\frac{2d-3}{d-2}} \sqrt{1 - y^{\frac 2 {d-2}}}}
,
$$
for  $y\in (0,1)$.
In particular, $Z_d$ has no Gaussian component, which once again shows that $Z_d$ is non-Gaussian. Since $\nu_d$ is supported on $(0,1)$, it follows that $Z_d$ has finite exponential moments, that is $\EE e^{c Z_d} <\infty$ for all $c\in \RR$; see~\cite[Theorem~26.1]{Sato_book}.
As $y\to 0$, the density of $\nu_d$ is asymptotically equivalent to $\frac {\textcolor{black}{2}}{d-2} y^{-\frac{2d-3}{d-2}}$and it follows from~\cite{Orey}, see also~\cite[Proposition~28.3]{Sato_book}, that $Z_d$ has an infinitely differentiable density and all of its derivatives vanish at $\infty$.
\end{remark}

\subsection{The case of \texorpdfstring{$\lambda$}{lambda}-geodesic hyperplanes}\label{subsec:lambdaHyperplanes}

In this section we propose a generalization of some of the results from \cite{HeroldHugThaele} and Theorem \ref{thm:NonClt} to so-called $\lambda$-geodesic hyperplanes. To introduce the model, we need to briefly recall some notions from the differential geometry of hypersurfaces. For more details we refer the reader to, e.g., \cite[Chapter 6]{doC} or  \cite[Chapter 8]{Lee}. Suppose that $H$ is a hypersurface of a Riemannian manifold $(M,g)$, equipped with a choice of a unit normal vector $n_H$. The second fundamental form of $H$ at a point $p \in H$ is the symmetric bilinear form $\B$ on the tangent space $T_pH$ defined by
\begin{equation*}
\B(X,Y) = g ( \widetilde{\nabla}_XY, n_H(p) ), \qquad X,Y \in T_pH,
\end{equation*}
where $\widetilde{\nabla}$ denotes the Levi-Civita connection of $M$. In other words, $B$ measures the normal component of the ambient covariant derivative. The second fundamental form relates curvature measurements performed in $H$ to those performed in $M$, in the sense that for any arclength parametrized curve $\gamma$ lying in $H$ one has 
\begin{equation*}
\widetilde{c}_\gamma^2 = c_\gamma^2 + B(\dot{\gamma}, \dot{\gamma})^2
\end{equation*}
(see, e.g., \cite[Proposition 8.10]{Lee}), where $\widetilde{c}_\gamma$ and $c_\gamma$ are the geodesic curvatures of $\gamma$ measured inside $M$ and $H$, respectively.

A point $p \in H$ is called \emph{umbilic} if the second fundamental form of $H$ at $p$ is a scalar multiple of the Riemannian metric, i.e.
\begin{equation}
\B(\,\cdot\,, \,\cdot\,) = \lambda \, g (\,\cdot\,, \,\cdot\,),
\end{equation}
and the factor $\lambda$ is called the \emph{normal curvature} at $p$ (note that the sign of $\lambda$ depends on the choice of $n_H$). The hypersurface $H$ is called \emph{totally umbilic} if all of its points are umbilic. It is known {(see \cite[Lemma 7.25]{Spivak})} that when the ambient manifold has constant curvature, the normal curvature of all points in a totally umbilic hypersurface is the same. For example, in Euclidean space the totally umbilic hypersurfaces are hyperplanes (with $\lambda = 0$) and spheres of radius $R  \in (0,\infty)$ (with $\lambda = \frac{1}{R}$). In particular, Euclidean hyperplanes are the only non-compact totally umbilic hypersurfaces of Euclidean space.

In hyperbolic space, totally umbilic hypersurfaces are again classified by their (constant) normal curvature $\lambda$ ({see \cite[Theorem 7.29]{Spivak}}): when $\lambda > 1$, these are hyperbolic spheres of radius ${\rm artanh} (1/\lambda)$; when $\lambda = 0$, these are totally geodesic hypersurfaces (which we will refer to as genuine hyperbolic hyperplanes); when $\lambda = 1$ these are \emph{horospheres} (which are, intuitively, spheres of infinite radius); and finally, when $\lambda \in (0,1)$ these are \emph{equidistants}, i.e.\ connected components of the set of points having distance ${\rm artanh}\,\lambda$ from a fixed genuine hyperplane. The latter three cases are all unbounded hypersurfaces. This suggests considering totally umbilic hypersurfaces of normal curvature $\lambda \in [0,1]$ as generalized hyperplanes in hyperbolic space. The following terminology comes from \cite{Sol}.

\begin{definition}
For $\lambda \in [0,1]$, a $\lambda$-geodesic hyperplane in hyperbolic space is a complete totally umbilic hypersurface of normal curvature $\lambda$. We denote by $\Hyp_\lambda$ the set of $\lambda$-geodesic hyperplanes in $\HH^d$. In particular, $\Hyp=\Hyp_0$.
\end{definition}

Note that for $\lambda > 0$, any $\lambda$-geodesic hyperplane admits a natural choice of a unit normal vector, corresponding to the choice of a positive sign of the normal curvature. This may also be seen by the fact that exactly one of the two domains bounded by $H$ is convex, and we call this domain the convex side of $H$ (in the case $\lambda = 1$, it is known as the horoball bounded by the horosphere $H$). {In these terms, the positive choice of unit normal vector is the one pointing into the convex side of $H$.}

The space $\Hyp_\lambda$, for $\lambda \in (0,1]$, carries an isometry-invariant measure $\Lambda_\lambda$ (which is unique up to a constant), described as follows, see {\cite[Proposition 2.2]{Sol} and the discussion below it}. Again, we fix an origin $\bo\in \HH^d$ and parametrize an element $H\in\Hyp_\lambda$ by a pair $(s, u) \in \RR \times\SS^{d-1}$, where $s\in\RR$ is the signed distance from $H$ to $\bo$ ({with $s > 0$ if $\bo$ lies on the convex side of $H$, and negative otherwise}), and $u\in\SS^{d-1}$ is the unit vector (again in the tangent space $T_\bo\HH^d$) along the geodesic passing through $\bo$ and intersecting $H$ orthogonally, {while pointing outside of the convex side}. The invariant measure $\Lambda_\lambda$ on $\Hyp_\lambda$ is then defined by the relation ({we note that our normalization for $\Lambda_\lambda$ differs from that of \cite{Sol}; the latter is given by $\omega_d$ times ours})
\begin{equation}\label{eq:dHlambda}
	\int_{\Hyp_\lambda}f(H)\,\Lambda_\lambda(\dint H) = \int_{\RR}\int_{\SS^{d-1}} f(H(s,u))\,(\cosh s-\lambda\sinh s)^{d-1} \,\dint u\,\dint s,
\end{equation}
where $f:\Hyp_\lambda\to\RR$ is a non-negative measurable function and $H(s,u)$ stands for the unique element of $\Hyp_\lambda$ parametrized by $(s,u)$ as just described. Note that in the case $\lambda=0$ 
{we recover the invariant measure considered in the previous section, that is $\Lambda_0 =  \Lambda$}.

{\begin{remark}
		Although we will make no use of it below, let us mention some simpler expressions for the measures $\Lambda_\lambda$ in the planar case. For example, when the hyperbolic plane $\HH^2$ is realized as the Poincar\'{e} disc model, $\lambda$-geodesics are realized by Euclidean lines and circular arcs making an angle $\theta$ with the boundary circle, where $\cos\theta=\lambda$. Thus a $\lambda$-geodesic $L_\lambda$ with $\lambda < 1$ is uniquely determined by its two boundary points $z_\pm \in \SS^1$, ordered so that traversing $L_\lambda$ from $z_-$ to $z_+$ agrees with its positive orientation (i.e., such that the convex side lies to the left of $L_\lambda$). This sets up a bijection
		\[
		(\SS^1 \times \SS^1) \setminus \Delta \rightarrow \Hyp_\lambda, \quad (z_-, z_+) \mapsto L_\lambda(z_-, z_+),
		\]
		where $\Delta$ denotes the diagonal (in the case $\lambda=0$  the ordering of $z_\pm$ is arbitrary, and the map $(\SS^1 \times \SS^1) \setminus \Delta \to \Hyp_0$ is a double cover). The measure $\Lambda_\lambda$ can then be represented as follows:
		\[
		\int_{\Hyp_\lambda} f(L) \,\Lambda_\lambda(\dint L) = \sqrt{1-\lambda^2} \,\iint_{(\SS^1 \times \SS^1) \setminus \Delta} f\big(L_\lambda(z_-, z_+)\big) {\dint z_- \, \dint z_+ \over |z_+-z_-|^2},
		\]
		where $\dint z_\pm$ is the normalized length measure on the circle and $|\cdot|$ denotes the Euclidean norm (see the proof of \cite[Proposition 6.1]{BenjaminiJonassonSchrammTykesson} for the case $\lambda=0$). In the case $\lambda=1$, horospheres are Euclidean circles tangent to the boundary, and thus a horosphere $L_1=L_1(z,r)$ is uniquely determined by its boundary tangency point $z\in \SS^1$ and its Euclidean radius $r \in (0,1)$. The measure $\Lambda_1$ is then given by 
		\[
		\int_{\Hyp_1} f(L) \,\Lambda_1(\dint L) = \int_{\SS^1} \int_0^1 f \big(L_1(z,r)\big) r^{-2} \,\dint r \,\dint z.
		\]
\end{remark}}

\begin{example}
	{Similarly to the case $\lambda=0$, the total invariant measure of $\lambda$-geodesic hyperplanes intersecting a hyperbolic ball of radius $R$ is obtained from \eqref{eq:dHlambda} and equals
		\begin{equation*}
		\Lambda_\lambda\big(\{H \in \Hyp_\lambda \,:\, H \cap B_R \neq \emptyset  \}\big) = \int_{-R}^R (\cosh s-\lambda \sinh s)^{d-1} \,\dint s.
		\end{equation*}
		In particular, for $\lambda = 1$ we recover the total invariant measure of horospheres intersecting the ball (cf. \cite[p. 113]{GNS}),
		\begin{equation*}
			\Lambda_1\big(\{H \in \Hyp_1 \,:\, H \cap B_R \neq \emptyset  \}\big) = \int_{-R}^R e^{-(d-1)s} \,\dint s = {2 \over d-1} \,\sinh \big[(d-1)R\big].
		\end{equation*}
		In general, the total invariant measures of hyperplanes meeting a \emph{$\lambda$-convex} body with smooth boundary in $\HH^d$ is given as a certain linear combination of its volume and mean curvature integrals of its boundary (see \cite[Theorem 1]{Sol} and the remark following it).}
\end{example}

Now, fix $\lambda\in[0,1]$. Our main object is a Poisson process $\eta_{\lambda}$ on the space $\Hyp_\lambda$ whose intensity measure is the invariant measure $\Lambda_\lambda$, together with its associated random union set
$$
U_{\lambda} := \bigcup_{H\in\eta_{\lambda}}H\,.
$$
We note that for $\lambda=0$ this model reduces to the one considered in the previous subsection, while for $\lambda>0$ it has not been considered in the existing literature, as far as we know. We are interested in the following functional:
\begin{align}
S_R^{(\lambda)} := \cH^{d-1}(U_\lambda\cap B_R^d)=\sum_{H\in\eta_{\lambda}}\cH^{d-1}(H\cap B_R^d),
\label{eq:S_R_lambda_def}
\end{align}
where $B_R^d$ stands again for the hyperbolic ball with radius $R>0$ around some arbitrary but fixed point in $\HH^d$.

Our first result addresses the expectations and variances of the random variables $S_R^{(\lambda)}$. For the variances we provide only the growth as $R \to \infty$, as the exact formulas (which will be derived during the proof) are otherwise unilluminating. We emphasize that the implied constants in the variance asymptotic depend on the $d$ and $\lambda$.

\begin{theorem}\label{thm:exp_var}
The expectation of $S_R^{(\lambda)}$ is given by
\begin{equation*}
\EE S_{R}^{(\lambda)} = \cH^{d}(B_R^d) = \omega_{d-1} \int_0^R \sinh^{d-1}(u)\,\dint u.
\end{equation*}
Additionally, the variance of $S_R^{(\lambda)}$ satisfies 
	\begin{equation*}
	\Var S_R^{(\lambda)} \asymp \begin{cases}
	e^R &: \text{for } \lambda \in [0,1) \text{ and }d = 2\\
	Re^{2R} &: \text{for } \lambda \in [0,1) \text{ and }d = 3\\
	 e^{2(d-2)R} &: \text{for } \lambda \in [0,1) \text{ and }d \geq 4\\
	 R \, e^{(d-1)R} &: \text{for } \lambda =1 \text{ and any }d\geq 2.	\end{cases}
	\end{equation*}
\end{theorem}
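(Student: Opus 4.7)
The plan is to derive the expectation from Campbell's formula together with an integral-geometric (Santal\'o-type) decomposition, and to treat the variance via the Poisson second-moment identity and rotational symmetry about $\bo$, reducing it to a one-dimensional integral whose asymptotics split into the four cases.

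\textbf{Expectation.} Campbell's formula gives
$$
\EE S_R^{(\lambda)} = \int_{\Hyp_\lambda} \cH^{d-1}(H \cap B_R^d)\, \Lambda_\lambda(\dint H).
$$
Writing $\cH^{d-1}(H \cap B_R^d) = \int_H \mathbf{1}_{B_R^d}(x)\,\cH^{d-1}(\dint x)$ and exchanging the order of integration via Fubini yields an isometry-invariant linear functional in $\mathbf{1}_{B_R^d}$ on $\HH^d$, so by a standard Santal\'o-type decomposition it equals a constant multiple of $\cH^d(B_R^d)$. With the normalization of $\Lambda_\lambda$ fixed in~\eqref{eq:dHlambda}---in particular with $\dint u$ the \emph{normalized} surface measure on $\SS^{d-1}$---a local computation near $\bo$, at which every $\lambda$-geodesic hyperplane is tangent to a Euclidean hyperplane of the tangent space, identifies this proportionality constant as $1$. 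The hyperbolic polar-coordinate formula then gives the claimed expression for $\EE S_R^{(\lambda)}$.

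\textbf{Variance: reduction to a one-dimensional integral.} The Poisson second-moment identity yields
$$
\Var S_R^{(\lambda)} = \int_{\Hyp_\lambda} \cH^{d-1}(H \cap B_R^d)^2\, \Lambda_\lambda(\dint H),
$$
and since the integrand depends on $H(s,u)$ only through $s$ by the rotational symmetry of $B_R^d$ about $\bo$, the parametrization in~\eqref{eq:dHlambda} together with the fact that $\dint u$ is a probability measure on $\SS^{d-1}$ reduces this to
$$
\Var S_R^{(\lambda)} = \int_\RR A_\lambda(s,R)^2\,(\cosh s - \lambda \sinh s)^{d-1}\,\dint s,
$$
where $A_\lambda(s,R) := \cH^{d-1}(H(s,u)\cap B_R^d)$. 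At this step I would invoke the geometry developed in Section~\ref{sec:background}: for $\lambda<1$, $H(s,u)$ is intrinsically a hyperbolic $(d-1)$-space of curvature $-(1-\lambda^2)$, and $H(s,u)\cap B_R^d$ is an intrinsic geodesic disk of some radius $\rho_\lambda(s,R)$ determined by a hyperbolic Pythagoras-type identity (reducing to $\cosh R = \cosh s\cosh\rho$ in the case $\lambda=0$); for $\lambda=1$ the horosphere is intrinsically Euclidean, and the intersection is a Euclidean ball whose radius is read off from a direct computation in the upper half-space model.

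\textbf{Variance: asymptotic analysis.} The core of the proof is the asymptotic evaluation of this integral as $R\to\infty$, which is where the four regimes diverge. For $\lambda\in[0,1)$, substituting the exponential volume-growth estimate $A_\lambda(s,R)\asymp e^{(d-2)\rho_\lambda(s,R)}$ and using the Pythagoras identity to translate $\rho_\lambda$-growth into dependence on $R$ and $s$, the integrand factorizes as $e^{2(d-2)R}$ times an $s$-kernel whose integrability properties dictate the residual $R$-dependence: for $d\geq 4$ the kernel is $L^1(\dint s)$ and one obtains $e^{2(d-2)R}$; for $d=3$ the kernel is bounded but fails to be integrable on the effective range $|s|\leq R$, producing the extra factor $R$ and hence $R\,e^{2R}$; and for $d=2$ the balance shifts and yields $e^R$. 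The horospherical case $\lambda=1$ is qualitatively different: $A_1(s,R)$ grows only polynomially while the density $(\cosh s-\sinh s)^{d-1}=e^{-(d-1)s}$ is exponentially weighted, and an explicit integration (feasible because, after expansion of products, the integrand becomes polynomial in $e^s$) produces the leading order $R\,e^{(d-1)R}$. The principal obstacle throughout is the uniform control in the boundary regime $\rho_\lambda(s,R)\downarrow 0$ (i.e.\ $s$ close to $\pm R$), where the exponential approximation for $A_\lambda$ breaks down and additional care is needed to extract finite positive constants in both directions of the $\asymp$ estimate.
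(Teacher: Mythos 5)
Your proposal follows the same overall route as the paper: Campbell's formula and the second--moment (Slivnyak--Mecke) identity reduce $\EE S_R^{(\lambda)}$ and $\Var S_R^{(\lambda)}$ to the integrals $I_{\lambda,1}(R)$ and $I_{\lambda,2}(R)$, rotational invariance about $\bo$ collapses these to one-dimensional integrals in the signed distance $s$, and the four regimes are separated exactly as in Proposition~\ref{prop:cumulants} with $k=2$ (integrable kernel for $d\geq 4$, a bounded non-integrable kernel of effective length $2R$ for $d=3$, an exact expansion for $\lambda=1$). The one genuinely different step is the expectation: where the paper simply invokes the Crofton formula of Solanes \cite{Sol}, you rederive it by noting that $A\mapsto\int_{\Hyp_\lambda}\cH^{d-1}(A\cap H)\,\Lambda_\lambda(\dint H)$ is an isometry-invariant measure on $\HH^d$, hence a constant multiple of $\cH^d$, and you pin the constant down by a local Euclidean computation at $\bo$; this is a legitimate, self-contained alternative, and with the normalization of \eqref{eq:dHlambda} (normalized spherical measure $\dint u$) the constant does come out to $1$. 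Two small caveats on the variance sketch: the blanket estimate $A_\lambda(s,R)\asymp e^{(d-2)\rho_\lambda(s,R)}$ is vacuous for $d=2$, where instead $A_\lambda=\tfrac{2}{\mu}\rho_\lambda$ and one integrates $\rho_\lambda^2\cosh(s-\Delta)$ using $\rho_\lambda=R-|s-\Delta|+O(1)$ (this still yields $e^R$, as in Proposition~\ref{prop:cumulants}(i), so no harm is done); and the degeneration of $\int_0^{\rho}\sinh^{d-2}u\,\dint u$ as $\rho\downarrow 0$ only endangers the lower bound in the $\asymp$ estimate, which is secured by restricting the range of $s$ --- an issue you correctly flag.
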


\begin{remark}
{Let us remark that the expectation in Theorem \ref{thm:exp_var} is independent of the parameter $\lambda$. This is due to the fact that the expected surface area of a section of a hyperbolic ball by a moving $\lambda$-geodesic hyperplane is given by the Crofton formula (see \cite[Proposition 3.1]{Sol}), whose form is independent of $\lambda$.}
\end{remark}

\begin{remark}
In all cases except for the case $d=2$ and $\lambda \in [0,1)$ the statement regarding the variance can be upgraded to asymptotic equivalence, with explicit constants (including dependence on $\lambda$). This will be clear from the proof of Theorem \ref{thm:exp_var}
\end{remark}

Our next result delivers for $\lambda<1$ first a quantitative bound on the distance between the normalized random variables $S_R^{(\lambda)}$ and a standard Gaussian random variable for $d=2$ and $d=3$. To measure these distances we use the Wasserstein and the Kolmogorov distance $d_{\Wass}$ and $d_{\Kol}$, respectively. For two random variables $X,Y$, they are defined as
$$
d_{\star}(X,Y) := \sup\big|\EE[h(X)]-\EE[h(Y)]\big|,\qquad\star\in\{\Kol,\Wass\},
$$
where the supremum is taken over all Lipschitz functions $h:\RR\to\RR$ with Lipschitz constant at most one in case of the Wasserstein distance (for $\star=\Wass$) and over all indicator functions of the form $h={\bf 1}_{(-\infty,x]}$, $x\in\RR$, for the Kolmogorov distance (for $\star=\Kol$). This generalizes {the case $i=d-1$ of }the  central limit theorem in \cite[Theorem 5]{HeroldHugThaele} to general $\lambda$-geodesic hyperplanes. Moreover, for $d\geq 4$ we shall again characterize the non-Gaussian limit distribution, generalizing thereby Theorem \ref{thm:NonClt} from the previous section.

\begin{theorem}\label{thm:NonCltlambda}
	Fix $\lambda\in[0,1)$.
	\begin{itemize}
	\item[(i)] Let $N$ be a standard Gaussian random variable. Then there exist an absolute constant $c\in(0,\infty)$ and a constant $R_\lambda \in(0,\infty)$ only depending on $\lambda$ such that for $R > R_\lambda$
	$$
	d_\star\Bigg({S_R^{(\lambda)}-\EE S_R^{(\lambda)}\over\sqrt{\Var S_R^{(\lambda)}}},N\Bigg) \leq \begin{cases}
c\,(1-\lambda)^{-1}\,e^{-R/2} &: \text{for }d=2,\\
c\,(1-\lambda)^{-1/2} \,R^{-1} &: \text{for }d=3,
			\end{cases}
	$$
	where $\star\in\{\Kol,\Wass\}$.
	\item[(ii)]
	Suppose that $d\geq 4$.
Then,
	$$
	{S_R^{(\lambda)}-\EE S_R^{(\lambda)}\over e^{R(d-2)}} \overset{D}{\longrightarrow} \frac{\omega_{d-1}} {(d-2)2^{d-2}\sqrt{1-\lambda^2}} \, Z_{d,\lambda},\qquad\text{as}\;\, R\to\infty,
	$$
where $Z_{d,\lambda}$ is an infinitely divisible, zero-mean random variable defined by
$$
Z_{d,\lambda} := \lim_{T\to+\infty} \Bigg(\sum_{\substack{s\in\zeta_{d,\lambda}\\ s\leq T}} \cosh^{-(d-2)}(s) - 2(1-\lambda^2)^{\frac{d-1}{2}}  \sinh T\Bigg)
\quad
\text{ in $L^2$ and a.s.,}
$$
and $\zeta_{d,\lambda}$ is a Poisson process on $[0,\infty)$ with density $s\mapsto 2(1-\lambda^2)^{\frac{d-1}{2}} \cosh^{d-1}(s)$.
\end{itemize}
\end{theorem}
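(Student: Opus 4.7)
Both parts rest on the representation of $S_R^{(\lambda)}$ as a first-order Poisson integral and on a common reduction to a one-dimensional problem. By the rotational invariance of $B_R^d$ about $\bo$, the quantity $\cH^{d-1}(H(s,u)\cap B_R^d)$ depends only on $s$; write it $V(s,R,\lambda)$. The parametrization \eqref{eq:dHlambda} then yields
$$
\int_{\Hyp_\lambda}V(s,R,\lambda)^k\,\Lambda_\lambda(\dint H)=\int_\RR V(s,R,\lambda)^k(\cosh s-\lambda\sinh s)^{d-1}\,\dint s,
$$
so that both parts hinge on sharp and uniform asymptotics for $V$ as $R\to\infty$, which are the subject of Section~\ref{sec:background}. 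A guiding observation is the identity $\cosh s-\lambda\sinh s=\sqrt{1-\lambda^2}\,\cosh(s-\operatorname{artanh}\lambda)$, so that the substitution $s'=s-\operatorname{artanh}\lambda$ symmetrizes the intensity on $\RR$ to $(1-\lambda^2)^{(d-1)/2}\cosh^{d-1}(s')\,\dint s'$. Geometrically, $s'$ is the signed distance from $\bo$ to the core geodesic hyperplane of the equidistant $H$, and in this coordinate the analysis proceeds in close parallel to the $\lambda=0$ case.

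For part (i), the plan is to invoke standard first-chaos Malliavin--Stein Berry--Esseen bounds for Poisson integrals, of the form $d_\Wass\lesssim\sigma_R^{-3}\int V^3\,\dint\Lambda_\lambda$ and $d_\Kol\lesssim\sigma_R^{-3}\int V^3\,\dint\Lambda_\lambda+\sigma_R^{-2}\bigl(\int V^4\,\dint\Lambda_\lambda\bigr)^{1/2}$, with $\sigma_R^2=\Var S_R^{(\lambda)}$. Plugging in the asymptotics of $V$---which in the coordinate $s'$ behave like $(1-\lambda^2)^{-1/2}(R-\log\cosh s')$ for $d=2$ and like $(1-\lambda^2)^{-1/2}e^{R}\cosh^{-1}(s')$ for $d=3$, on the effective range $|s'|\lesssim R$---reduces the moment integrals to tractable one-dimensional integrals of the form $\int(R-\log\cosh s')^a\cosh^b(s')\,\dint s'$. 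Combining these with the variance asymptotics of Theorem~\ref{thm:exp_var} and tracking the $\lambda$-dependent prefactors then produces upper bounds of the claimed form.

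For part (ii), the L\'evy--Khintchine formula for a Poisson integral gives an explicit characteristic function of $X_R:=(S_R^{(\lambda)}-\EE S_R^{(\lambda)})/e^{R(d-2)}$, namely
$$
\log\EE e^{\mathfrak{i}tX_R}=\int_\RR\bigl(e^{\mathfrak{i}tV(s,R,\lambda)e^{-R(d-2)}}-1-\mathfrak{i}tV(s,R,\lambda)e^{-R(d-2)}\bigr)(\cosh s-\lambda\sinh s)^{d-1}\,\dint s,
$$
and the strategy is to pass to the limit inside. In the symmetric coordinate $s'$, the sharp asymptotic $V(s,R,\lambda)e^{-R(d-2)}\to c_{d,\lambda}\cosh^{-(d-2)}(s')$ with $c_{d,\lambda}:=\omega_{d-1}/((d-2)2^{d-2}\sqrt{1-\lambda^2})$, from Section~\ref{sec:background}, combined with an $R$-uniform majorant of the order $\cosh^{3-d}(s')$ (integrable on $\RR$ precisely because $d\geq 4$), licences dominated convergence of the integrand. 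Folding the resulting limiting integral onto $[0,\infty)$ via the evenness in $s'$ doubles the intensity to $2(1-\lambda^2)^{(d-1)/2}\cosh^{d-1}(s')$ and identifies the limit as the characteristic function of $c_{d,\lambda}Z_{d,\lambda}$; the a.s.\ and $L^2$-existence of $Z_{d,\lambda}$ itself follows by the martingale argument of Remark~\ref{rem:Zd}, since the truncated variance $\int_0^T\cosh^{3-d}(s)\,\dint s$ stays bounded exactly when $d\geq 4$.

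The main obstacle throughout is the geometric analysis of $V(s,R,\lambda)$ underlying both parts. Unlike the $\lambda=0$ case of \cite{HeroldHugThaele}, which hinges on the elementary relation $\cosh R=\cosh s\cosh r$ between ambient and intrinsic radii, a $\lambda$-equidistant must first be resolved in Fermi coordinates relative to its core geodesic hyperplane, and the induced conformal scaling $\cosh(\operatorname{artanh}\lambda)=(1-\lambda^2)^{-1/2}$ of its metric together with the intrinsic curvature $-(1-\lambda^2)$ must be propagated sharply into the volume estimates. These estimates must simultaneously be precise enough to recover the prefactor $c_{d,\lambda}$ and the symmetric coordinate $s'$, and uniform enough in $s$ to produce a dominator integrable against the exponentially growing intensity---which is what Section~\ref{sec:background} is designed to achieve.
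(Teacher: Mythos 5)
The proposal is correct and follows essentially the same route as the paper: part (i) reduces to the moment integrals $I_{\lambda,k}(R)$ and a Malliavin--Stein-type Berry--Esseen bound (the student's third-moment Wasserstein bound is dominated, via Cauchy--Schwarz, by the $\sqrt{I_{\lambda,4}}/I_{\lambda,2}$ bound the paper takes from \cite{RS,SchulteKolmogorov}, so the same rates and $\lambda$-dependence emerge), and part (ii) is the paper's argument verbatim in outline --- exponential formula for the characteristic function, pointwise asymptotics of the normalized section volume, an integrable majorant of order $\cosh^{3-d}$ valid exactly for $d\geq 4$, dominated convergence, and symmetrization in $s'=s-\Delta$ to fold the intensity onto $[0,\infty)$ with the factor $2(1-\lambda^2)^{(d-1)/2}$. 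The only substantive work not spelled out, namely the volume asymptotics and uniform bounds for $\lambda$-geodesic sections, is correctly delegated to the results of Section~\ref{sec:background}, just as in the paper.
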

The characteristic function of the random variable $Z_{d,\lambda}$ appearing in Theorem \ref{thm:NonCltlambda} (ii) is the ${(1-\lambda^2)^{\frac{d-1}{2}} }$-th power of the characteristic function of $Z_d$ from Theorem~\eqref{thm:NonClt}. This means that both variables can be embedded into the same L\'evy process (and belong to the same convolution semigroup). The properties of $Z_{d,\lambda}$ are similar to those of $Z_d$.

We shall now discuss the remaining case $\lambda=1$, i.e., the case of the total surface area of a Poisson process of horospheres in $\HH^d$. What distinguishes $\lambda$-geodesic hyperplanes for $\lambda<1$ from horospheres (that is, $\lambda$-geodesic hyperplanes for $\lambda=1$) is their intrinsic geometry. In fact, for $\lambda<1$ the intrinsic geometry is again hyperbolic with constant sectional curvature $-(1-\lambda^2)$, while for $\lambda=1$ the intrinsic geometry is Euclidean, see Section \ref{sec:background} below. This is also reflected by the fluctuation behaviour as we shall see now. In fact, we have Gaussian fluctuations in any space dimension, but surprisingly with the non-standard variance $1/2$ instead of $1$. The result reads as follows.

\begin{theorem}\label{thm:CLT_horosphere}
	Let $N_{1\over 2}$ be a centred Gaussian random variable of variance $\frac{1}{2}$. Then
	\begin{equation*}
	\frac{S_R^{(1)} - \EE S_R^{(1)}}{\sqrt{\Var S_R^{(1)}}} \xrightarrow{D} N_{1\over 2}, \qquad \text{as } R \to \infty.
	\end{equation*}	
\end{theorem}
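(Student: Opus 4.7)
My approach is to compute the limit of the characteristic function of $T_R := (S_R^{(1)} - \EE S_R^{(1)})/c_R$ with $c_R := \sqrt{\Var S_R^{(1)}}$, directly via the L\'evy--Khintchine formula for centred Poisson integrals. Using the rotational invariance of the ball $B_R^d$ around $\bo$ together with the parametrisation \eqref{eq:dHlambda} with $\lambda = 1$ (so that $\cosh s - \sinh s = e^{-s}$), the log-characteristic function reduces to the one-dimensional integral
\begin{equation*}
\psi_R(\xi) := \log \EE e^{\mathfrak{i} \xi T_R} = \int_{-R}^R \bigl(e^{\mathfrak{i}\xi \phi(s)/c_R} - 1 - \mathfrak{i}\xi \phi(s)/c_R\bigr) e^{-(d-1)s}\,\dint s,
\end{equation*}
where $\phi(s) := \cH^{d-1}(H(s,u)\cap B_R^d)$. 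Since horospheres are intrinsically Euclidean, $\phi(s) = \kappa_{d-1}\, r(s)^{d-1}$ for an explicit intrinsic radius $r(s)$ that will be made explicit in Section~\ref{sec:background}; this background yields the bulk estimate $\phi(s) \asymp e^{(d-1)(R+s)/2}$ and is consistent with the variance asymptotic $c_R^2 \asymp R\, e^{(d-1)R}$ from Theorem~\ref{thm:exp_var}. The goal will be to show $\psi_R(\xi) \to -\xi^2/4$ for every $\xi \in \RR$ and conclude by L\'evy's continuity theorem.

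The decisive step is to split the $s$-integral at the threshold $s_R^\ast$ defined by $|\xi|\phi(s_R^\ast)/c_R = 1$; the asymptotics above force $s_R^\ast = O(\log R)$. On the bulk piece $[-R, s_R^\ast]$ we have $|\xi \phi(s)/c_R| \leq 1$ uniformly, so the expansion $e^{\mathfrak{i}z} - 1 - \mathfrak{i}z = -z^2/2 + O(|z|^3)$ applies and produces the leading contribution $-\xi^2/(2c_R^2)\int_{-R}^{s_R^\ast}\phi(s)^2 e^{-(d-1)s}\,\dint s$, with a cubic remainder that is readily shown to vanish from the bulk asymptotic of $\phi$. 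The crucial observation is now the comparison of this truncated variance integral with $c_R^2 = \int_{-R}^R \phi(s)^2 e^{-(d-1)s}\,\dint s$: since $\phi(s)^2 e^{-(d-1)s}$ is asymptotically constant of order $e^{(d-1)R}$ throughout the bulk while $[-R, s_R^\ast]$ has length $R + o(R)$ against the full length $2R$, exactly one half of $c_R^2$ is captured, producing the limit $-\xi^2/4$. On the complementary region $[s_R^\ast, R]$, the total Poisson intensity $\int_{s_R^\ast}^R e^{-(d-1)s}\,\dint s$ is of order $1/R$, so the contribution from $(e^{\mathfrak{i}\xi\phi/c_R} - 1)$ is $O(1/R)$ via the bound $|e^{\mathfrak{i}z} - 1| \leq 2$, and the remaining linear piece $\mathfrak{i}\xi\int_{s_R^\ast}^R \phi(s)/c_R \cdot e^{-(d-1)s}\,\dint s$ vanishes by a direct estimate.

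The structural reason for the non-standard variance $1/2$ emerges naturally from this split: only the portion of the signed-distance range on which each horosphere contributes a negligible fraction of $c_R$ generates Gaussian fluctuations in the characteristic function, whereas the complementary range $[s_R^\ast, R]$ hosts rare but atypically large atoms of the Poisson process which together carry the other half of the variance but contribute only a vanishing compound-Poisson piece to $\psi_R(\xi)$. This picture also explains why the method of cumulants cannot work here: one can check directly that the normalised cumulants $\cum_\ell(T_R)$ for $\ell \geq 3$ blow up like $e^{(d-1)(\ell/2-1)R}/R^{\ell/2}$. The main technical obstacle in implementing the plan will be the uniform control of $\phi(s)$ near the endpoints $s = \pm R$, where $\phi$ vanishes while the Poisson density $e^{-(d-1)s}$ is exploding at $s = -R$; this will rely on the exact factorisation $\phi(s)^2 e^{-(d-1)s} = 2^{d-1}\kappa_{d-1}^2(\cosh R - \cosh s)^{d-1}$ together with the geometric background in Section~\ref{sec:background} and careful boundary-layer estimates.
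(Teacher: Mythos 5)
Your proposal is correct, and the key quantitative claims all check out against the paper's formulas: by \eqref{eq:vol_intersection_horo} one has $\phi(s)^2e^{-(d-1)s}=2^{d-1}\kappa_{d-1}^2(\cosh R-\cosh s)^{d-1}$, which is even in $s$ and flat of order $\cosh^{d-1}R$ except near $s=\pm R$, so the truncation at $s_R^\ast=\tfrac{1}{d-1}\log(2R/\xi^2)+O(1)=O(\log R)$ indeed captures asymptotically half of $c_R^2\sim 2\kappa_{d-1}^2Re^{(d-1)R}$; the cubic remainder on the bulk is $O(R^{-1})$, the upper range carries Poisson intensity $O(R^{-1})$, and the linear compensator there is also $O(R^{-1})$; moreover $e^s(\cosh R-\cosh s)$ is increasing for $s<\log\cosh R$, which legitimises the uniform bound $|\xi\phi(s)/c_R|\le 1$ on $[-R,s_R^\ast]$. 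The paper reaches the same limit by a structurally different decomposition: it splits the Poisson process itself at $s=0$ into two \emph{independent} random variables $S^{(1)}_{R,\pm}$, kills the positive part wholesale by a first-moment bound ($\EE S^{(1)}_{R,+}=O(e^{(d-1)R/2})=o(\sigma_R)$, hence convergence to $0$ in probability by Markov's inequality), and runs the characteristic-function computation only for $S^{(1)}_{R,-}$, whose variance is \emph{exactly} $\tfrac12\sigma_R^2$ by the evenness of $(\cosh R-\cosh s)^{d-1}$. The paper's route buys exact independence, an exact (rather than asymptotic) halving of the variance, and a fixed ($\xi$-independent) split point, at the cost of introducing the auxiliary decomposition; your route keeps everything inside a single L\'evy--Khinchin integral but must separately control the linear compensator on $[s_R^\ast,R]$ and justify monotonicity of $\phi$. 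Both yield the same structural explanation of the variance $1/2$, and your observation that $\cum_\ell(T_R)\asymp e^{(d-1)(\ell/2-1)R}/R^{\ell/2}$ for $\ell\ge 3$ is consistent with Proposition \ref{prop:cumulants}\,(iv), confirming that the method of cumulants is unavailable here. What remains to turn your plan into a proof is only the routine writing-out of the boundary-layer estimates you already identify, which the exact factorisation renders harmless.
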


\section{Proof of Theorem \ref{thm:NonClt}}\label{sec:ProofHyperplanes}

For $R>0$, define $f_R:\Hyp\to[0,\infty)$ by $f_R(H):=\cH^{d-1}(H\cap B_R^d)$. Then $S_R$ can be represented as $S_R=\sum_{H\in\eta}f_R(H)$. To describe the characteristic function of the random variable $S_R$, we first observe that $f_R(H)$ only depends on the hyperbolic distance $d_h(H,p)$ from $H$ to the centre $p$ of the ball $B_R^d$. This allows us to write $f_R(s)$ instead of $f_R(H)$ for any $H\in\Hyp$ with $s=d_h(H,p)$. Moreover, the point process of the distances of the hyperplanes from $\eta$ is an inhomogeneous Poisson process $\zeta_d$ on $[0,\infty)$ with density function $s\mapsto {2}\cosh^{d-1}(s)$. This follows directly from the concrete representation of the invariant measure $\Lambda$ on $\Hyp$, see \eqref{eq:dH}. As a consequence, if $\mathfrak{i}=\sqrt{-1}$ denotes the imaginary unit, we have from \cite[Lemma 15.2]{KallenbergVol1} that, for $t\in\RR$,
\begin{align*}
	\EE e^{\mathfrak{i}tS_R} = \exp\Big({2}\int_0^R(e^{\mathfrak{i}tf_R(s)}-1)\cosh^{d-1}(s)\,\dint s\Big)
\end{align*}
and hence
\begin{align*}
	\psi_R(t):=\EE e^{\mathfrak{i}t{S_R-\EE S_R\over e^{R(d-2)}}} = \exp\Big({2}\int_0^R(e^{\mathfrak{i}tg_R(s)}-1-{\mathfrak{i}tg_R(s)})\cosh^{d-1}(s)\,\dint s\Big)
\end{align*}
with $g_R(s):=f_R(s)/e^{R(d-2)}$. In the next lemma, which {complements} \cite[Lemma 7]{HeroldHugThaele}, we determine the asymptotic behaviour of $g_R(s)$, as $R\to\infty$.


\begin{lemma}\label{lem:constantC}
Let $s\in[0,\infty)$. Then $\lim_{R\to\infty} g_R(s) =  g(s) :=  {\omega_{d-1}\over (d-2)2^{d-2}}\,\cosh^{-(d-2)}(s)$.
\end{lemma}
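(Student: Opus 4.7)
The plan is to obtain an explicit formula for $f_R(s)$ as a one-dimensional integral and then perform an elementary asymptotic analysis.

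First, I would identify the intersection $H\cap B_R^d$ geometrically. Since $H$ is a totally geodesic $(d-1)$-dimensional submanifold of $\HH^d$, its induced metric makes it isometric to $\HH^{d-1}$. The set $H\cap B_R^d$ consists of those points $q\in H$ with $d_h(p,q)\le R$, where $p$ is the centre of $B_R^d$. Letting $p'$ denote the foot of the perpendicular from $p$ to $H$ (so that $d_h(p,p')=s$), the hyperbolic Pythagorean theorem gives $\cosh d_h(p,q)=\cosh(s)\cosh d_h(p',q)$. Hence $H\cap B_R^d$ is precisely the hyperbolic ball in $H\cong\HH^{d-1}$ of centre $p'$ and radius
\[
\rho_R(s):=\arcosh\!\left(\frac{\cosh R}{\cosh s}\right),
\]
which is well defined for $R\ge s$. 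Using the standard formula for the $(d-1)$-dimensional hyperbolic volume of a ball in $\HH^{d-1}$, this yields
\[
f_R(s)=\omega_{d-1}\int_{0}^{\rho_R(s)}\sinh^{d-2}(u)\,\dint u
\]
for $R\ge s$, and $f_R(s)=0$ otherwise.

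Next I would pass to the asymptotics as $R\to\infty$ with $s$ fixed. Since $\cosh R\to\infty$, we have $\rho_R(s)\to\infty$ and in fact
\[
e^{\rho_R(s)}=\frac{\cosh R}{\cosh s}+\sqrt{\frac{\cosh^{2}R}{\cosh^{2}s}-1}\sim\frac{2\cosh R}{\cosh s}\sim\frac{e^{R}}{\cosh s}.
\]
For the integral, the elementary antiderivative shows that $\int_0^{r}\sinh^{d-2}(u)\,\dint u\sim \frac{e^{(d-2)r}}{(d-2)2^{d-2}}$ as $r\to\infty$ (this follows from $\sinh^{d-2}(u)=(e^u-e^{-u})^{d-2}/2^{d-2}\sim e^{(d-2)u}/2^{d-2}$). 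Substituting $r=\rho_R(s)$ and combining with the previous estimate gives
\[
f_R(s)\sim \frac{\omega_{d-1}}{(d-2)2^{d-2}}\bigl(e^{\rho_R(s)}\bigr)^{d-2}\sim \frac{\omega_{d-1}}{(d-2)2^{d-2}}\cdot\frac{e^{(d-2)R}}{\cosh^{d-2}(s)},
\]
where I have also used $(2\cosh R)^{d-2}\sim e^{(d-2)R}\cdot 2^{d-2}/2^{d-2}=e^{(d-2)R}$ after cancellation. Dividing by $e^{(d-2)R}$ yields $g_R(s)\to g(s)=\frac{\omega_{d-1}}{(d-2)2^{d-2}}\cosh^{-(d-2)}(s)$, as claimed.

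No step presents a real obstacle: the geometric identification of $H\cap B_R^d$ as a hyperbolic ball of radius $\rho_R(s)$ is a direct application of the hyperbolic Pythagorean theorem, and the remaining computation is a routine asymptotic evaluation of a one-dimensional integral. The only point requiring a little care is keeping the powers of~$2$ correct when comparing $\cosh^{d-2}R$ with $e^{(d-2)R}$ and $(2\cosh R/\cosh s)^{d-2}$ with $e^{(d-2)\rho_R(s)}$, so that the constant $\omega_{d-1}/((d-2)2^{d-2})$ emerges exactly.
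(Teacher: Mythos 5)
Your proof is correct and follows essentially the same route as the paper: both reduce $f_R(s)$ to $\omega_{d-1}\int_0^{\arcosh(\cosh R/\cosh s)}\sinh^{d-2}(u)\,\dint u$ (the paper cites Ratcliffe for this, while you rederive it via the hyperbolic Pythagorean theorem) and then combine $\arcosh(t)=\log(2t)+o(1)$ with the asymptotics $\int_0^t\sinh^{d-2}(u)\,\dint u\sim e^{(d-2)t}/((d-2)2^{d-2})$. The bookkeeping of the powers of $2$ comes out exactly as in the paper, so nothing is missing.
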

\begin{proof}
According to \cite[Theorem 3.5.3]{Ratcliffe}, it holds that
$$
g_R(s)=e^{-(d-2)R}\omega_{d-1}\int_0^{\arcosh({\cosh(R)\over\cosh(s)})}\sinh^{d-2}(u)\,\dint u.
$$
Next, we recall the logarithmic representation of $\arcosh(t)$:
$$
\arcosh(t) = \log(t+\sqrt{t^2-1}) = \log(2t)+o(1),\qquad\text{as}\ t\to\infty,
$$
where $o(1)$ stands for a sequence which converges to zero, as $t\to\infty$. Thus, as $R\to\infty$,
\begin{align*}
\arcosh\Big({\cosh(R)\over\cosh(s)}\Big) = \log(2 \cosh R) - \log(\cosh s) + o(1)
											   = R - \log (\cosh s) + o(1).
\end{align*}
Next, we observe that
\begin{align}
\int_0^t\sinh^{d-2}(u)\,\dint u \sim \int_0^t \Big({e^u\over 2}\Big)^{d-2}\,\dint u \sim {1\over (d-2)2^{d-2}}e^{(d-2)t},\label{eq:hyper_int_asymp}
\end{align}
as $t\to\infty$. Combining this with the above representation for $g_R(s)$ we arrive at
\begin{align*}
g_R(s) &\sim e^{-(d-2)R}\omega_{d-1}\cdot{1\over (d-2)2^{d-2}}\cdot e^{(d-2)(R-\log(\cosh(s))+o(1))}\\
&={\omega_{d-1}\over (d-2)2^{d-2}}\,e^{(d-2)(-\log(\cosh s)+o(1))}\\
& \xrightarrow[R \to \infty]{} {\omega_{d-1}\over (d-2)2^{d-2}}\,\cosh^{-(d-2)}(s).
\end{align*}
This completes the proof.
\end{proof}

Lemma \ref{lem:constantC} shows that pointwise in $[0,\infty)$ the function $g_R$ converges to the function $g(s)={\omega_{d-1}\over (d-2)2^{d-2}}\,\cosh^{-(d-2)}(s)$, as $R\to\infty$. In order to prove that this together with the dominated convergence theorem implies that
\begin{align}\label{eq:ConvCharFkt}
\lim_{R\to\infty}\psi_R(t) = \psi(t) := \exp\Big({2}\int_0^\infty(e^{\mathfrak{i}tg(s)}-1-\mathfrak{i}tg(s))\cosh^{d-1}(s)\,\dint s\Big),
\end{align}
it remains to find an integrable upper bound for {the absolute value} of the integrand $(e^{\mathfrak{i}tg_R(s)}-1-{\mathfrak{i}tg_R(s)})\cosh^{d-1}(s)$. {Using Taylor expansion (see, e.g., \cite[Lemma 6.15]{KallenbergVol1}) gives} for any $s\geq 0$,
$$
|e^{\mathfrak{i}tg_R(s)}-1-{\mathfrak{i}tg_R(s)}| \leq {1\over 2}t^2g_R(s)^2
$$
and since $\cosh(s) \leq e^s$, we see that
$$
|(e^{\mathfrak{i}tg_R(s)}-1-{\mathfrak{i}tg_R(s)})\cosh^{d-1}(s)| \leq {1\over 2}t^2 g_R(s)^2e^{(d-1)s}.
$$
Also, we have that $g_R(s)\leq{\omega_{d-1}\over d-2}e^{-s(d-2)}$ by \cite[Lemma 7]{HeroldHugThaele}. This leads to the upper bound
$$
|(e^{\mathfrak{i}tg_R(s)}-1-{\mathfrak{i}tg_R(s)})\cosh^{d-1}(s)| \leq {\omega_{d-1}^2\over 2(d-2)^2}t^2e^{-s(d-3)},
$$
which is in fact integrable on $[0,\infty)$ by our assumption that $d\geq 4$. So, \eqref{eq:ConvCharFkt} proves that, as $R\to\infty$, the random variables ${S_R-\EE S_R\over e^{R(d-2)}}$ converge in distribution to a random variable with characteristic function $\psi$. To conclude, we have to identify this random variable as $C_dZ_d$, where $C_d := \frac{\omega_{d-1}}{(d-2)2^{d-2}}$. This can be done with the help of the L\'evy-Khinchin formula~\cite[Theorem~8.1]{Sato_book}, or more directly seen as follows.
We have to show that the characteristic function of the random variable $Z_d$ is $\psi(C_d^{-1}t)$. Denoting $h(s) := C_d^{-1}g(s )= \cosh^{-(d-2)}(s)$, we have
\begin{equation}\label{eq:psi_rescaled}
\psi(C_d^{-1}t) =  \exp\Big({2}\int_0^\infty(e^{\mathfrak{i}th(s)}-1-\mathfrak{i}th(s))\cosh^{d-1}(s)\,\dint s\Big).
\end{equation}
Recall the random variables $Y_T$, defined by $Y_T = \sum_{s \in \zeta_d, s \leq T}h(s)$, where $\zeta_d$ is an inhomogeneous Poisson process on $[0,\infty)$ with density function $s\mapsto{2}\cosh^{d-1}(s)$. As explained in Remark \ref{rem:Zd}, as $T \to \infty$ the random variables $Y_T-\EE Y_T$ converge a.s.\ and in $L^2$, and their limit is by definition $Z_d$. In particular, $Y_T-\EE Y_T$ converges to $Z_d$ in distribution. The characteristic function of the random variable $Y_T-\EE Y_T$ is computed easily as
\begin{equation*}
 \EE e^{it(Y_T - \EE Y_T)} = \exp\Big({2}\int_0^T(e^{\mathfrak{i}th(s)}-1-\mathfrak{i}th(s))\cosh^{d-1}(s)\,\dint s\Big),
\end{equation*}
see, e.g.,\ \cite[Lemma 7.1]{KallenbergVol1}. Using a dominated convergence argument as above one sees that as $T \to \infty$, these characteristic functions converge to \eqref{eq:psi_rescaled}, and so $Z_d = \lim_{T \to \infty}(Y_T -\EE Y_T)$ indeed has characteristic function $\psi(C_d^{-1}t)$, as required. This completes the proof.
\qed

\section{Background material on \texorpdfstring{$\lambda$}{lambda}-geodesic hyperplanes}\label{sec:background}

In this section we collect some facts about the geometry of $\lambda$-geodesic hyperplanes which will be needed in the proofs of Theorem \ref{thm:NonCltlambda} and Theorem \ref{thm:CLT_horosphere}.  We begin by  introducing some notation which is relevant for the case $\lambda \in [0,1)$. Let $\theta \in (0, \frac{\pi}{2})$ be an angle such that $\cos \theta = \lambda $. We write also
\begin{equation}
\mu := \sin \theta = \sqrt{1-\lambda^2} ; \quad m := \tan\theta = \frac{\sqrt{1-\lambda^2}}{\lambda}.
\end{equation}
Finally, we define
\begin{equation*}
\Delta := {\rm artanh}\, \lambda = \frac{1}{2} \log \frac{1 + \lambda}{1-\lambda} = - \log \tan\frac{\theta}{2}  \in (0, \infty).
\end{equation*}
We recall from Section \ref{subsec:lambdaHyperplanes}  that $\Delta$ is the distance from a $\lambda$-geodesic hyperplane (with $\lambda \in (0,1)$) to the genuine geodesic hyperplane from which it is equidistant.  We note also that the expression for the invariant measure $\Lambda_\lambda$ from \eqref{eq:dHlambda} may be simplified using the identity
\begin{equation}\label{eq:density_Delta}
\cosh s-\lambda\sinh s = \mu \cosh(s-\Delta),
\end{equation}
which will be used frequently in the sequel and follows from the formula $\cosh(s-\Delta) = \cosh s \cosh \Delta - \sinh s \sinh \Delta$ together with $\cosh \Delta = 1/\sqrt {1-\lambda^2}$ and $\sinh \Delta = \lambda / \sqrt{1-\lambda^2}$.

\subsection{\texorpdfstring{$\lambda$}{Lambda}-geodesic hyperplanes in the upper half-space model}\label{subsec:upper_half_plane}

Below we will perform computations in the upper half-space model for hyperbolic space:
\[
\HH^d = \{ (x_1, \ldots, x_d)\in\RR^d \,:\, x_1 > 0 \},
\]
({note that we take the first coordinate to be positive, contrary to standard practices}) equipped with the hyperbolic Riemannian metric
\begin{equation}
\dint s^2 = \frac{\dint x_1^2 + \cdots +  \dint x_d^2}{x_1^2}.
\end{equation}
{
The induced distance function on $\HH^d$ is given by
\[
d(x,y) = \arcosh \left[1 + { \sum_{j=1}^d (x_j-y_j)^2 \over 2 x_1 y_1}\right].
\]
We recall the fact that in this model metric balls are also Euclidean balls, albeit with different centers and radii (see, e.g.,  Fact 1 in \cite[Section 13]{CannonFloydKenyonParry}).  In fact, using the special case of the formula above for the hyperbolic distance for points lying on the same line parallel to the $x_1$-axis, namely
\[
d\bigl((a, x_2, \ldots, x_d),(b,x_2, \ldots, x_d)\bigr) = \left|\log {a \over b}\right|,
\] 
it follows that the hyperbolic radius of a hyperbolic ball (realized in the model as a Euclidean ball) is given by one-half of the logarithm of the ratio between the maximal and minimal $x_1$-coordinates of points in the ball. For example, the hyperbolic ball $B_R^d = B(e_1, R)$ of radius $R$ around the point $e_1=(1,0,\ldots, 0)$ is realized as the Euclidean ball with center $\cosh R\cdot e_1$ and radius $\sinh R$.
}

\renewcommand{\thefootnote}{\arabic{footnote}}
\setcounter{footnote}{0}
Next we recall (see \cite[\S 8.5]{doC} {or \cite[Theorem 7.29]{Spivak}, see also \cite[Proposition 1.2]{Cza}})\footnotemark{} that in this model, $\lambda$-geodesic hyperplanes are described as follows:
\begin{enumerate}[(i)]
	\item When $\lambda \in [0,1)$, they are given by (the intersection of the upper half-space with) Euclidean hyperplanes and spheres which intersect the boundary of $\HH^d$ at an angle $\theta$ as above (i.e. with $\cos \theta = \lambda$).
	\item When $\lambda = 1$, they are horospheres, that is Euclidean spheres tangent to the boundary of $\HH^d$ or Euclidean hyperplanes parallel to the boundary.
\end{enumerate}
\footnotetext{In fact, the listed references do not contain a complete proof of the above description. In \cite{Spivak} the cases $\lambda=0$ and $\lambda=1$ are proven in full, while for $\lambda\in(0,1)$ the precise relation $\lambda = \cos\theta$ is missing (see Theorem 7.29 and the discussion on pages 78--79); the same relation is only given as Exercise 6(e) of \cite[\S 8.5]{doC}, while in \cite{Cza} its proof relies on a reference unavailable in English. For completeness we sketch the proof of this relation. By \cite[Theorem 7.29]{Spivak} a $\lambda$-geodesic hyperplane with $\lambda \in (0,1)$ is a Euclidean hyperplane or sphere meeting the boundary at some angle $\theta \in (0, {\pi \over 2})$. By transitivity we may assume that it is a hyperplane of the form $\{x_1 = \tan\theta \cdot  x_d\}$. Since it is umbilical it suffices to compute the curvature of its intersection with the plane spanned by the $x_1$- and  $x_d$-coordinates and show that it is equal to $\cos\theta$. Finally, this two-dimensional computation is carried out in \cite[Lemma 3]{GR}.}

Of particular use to us below will be the `linear' $\lambda$-geodesic hyperplanes, i.e., those realized in this model as Euclidean hyperplanes. We note that for such a $\lambda$-geodesic hyperplane $H$, its convex side is (the intersection with $\HH^d$ of) the Euclidean half-space lying above $H$, {in the sense that the normal vector to $H$ pointing into this half-space has positive first coordinate}. 

\subsection{Intrinsic geometry of \texorpdfstring{$\lambda$}{lambda}-geodesic hyperplanes}

An important property of $\lambda$-geodesic hyperplanes is that they are themselves manifolds of constant sectional curvature $-(1-\lambda^2)$. In particular, their intrinsic geometry is Euclidean for $\lambda = 1$, and (up to rescaling) hyperbolic for $\lambda \in [0,1)$. This can be seen abstractly using the Gauss equation (see, e.g., \cite[Theorem 8.5]{Lee} or \cite[Theorem 6.2.5]{doC}), but can also be checked explicitly in our model, as we do next.

We start with the case of horospheres. By transitivity of the isometry group it is enough to consider linear horospheres of the form
\begin{equation*}
H = \{x=(x_1,\ldots,x_d) \in \HH^d\,:\, x_1 = c  \}
\end{equation*}
for some $c>0$. The induced Riemannian metric on $H$ is then $\dint s^2\bigl|_H = c^{-2} (\dint x_2^2 + \cdots + \dint x_d^2)$, which is Euclidean.

In the case of $\lambda \in [0,1)$, it suffices, again by symmetry, to consider linear $\lambda$-geodesic hyperplanes of the form
\begin{equation}\label{eq:slanted_hypersphere}
H = \{x \in \HH^d \,:\, x_1 = m(x_d - a) \}
\end{equation}
for some $a\in\RR$. {Since $H$ makes an angle $\theta$ with the boundary hyperplane, we have $m = \tan \theta$ (note that this implies $1 + m^{-2} = \mu^{-2}$)}.  The induced Riemannian metric on $H$ is
\begin{align*}
\dint s^2 \bigr|_{H} &= \frac{\dint x_1^2 + \cdots + \dint x_{d-1}^2 +  \frac{1}{m^2} \dint x_1^2}{x_1^2} = \frac{ (1 + m^{-2})\dint x_1^2 + \dint x_2^2+ \cdots+ \dint x_{d-1}^2}{x_1^2}.
\end{align*}
Now if we define new coordinates on $H$ by
\begin{equation*}
v_1 = \frac{1}{\mu} x_1, \, v_2 = x_2,\, \ldots,\, v_{d-1} = x_{d-1},
\end{equation*}
we get that
\begin{equation}\label{eq:coords_hyperbolic}
\dint s^2 \bigr|_H = \frac{1}{\mu^2} \frac{\dint v_1^2 + \cdots + \dint v_{d-1}^2}{v_1^2},
\end{equation}
which is indeed a rescaled hyperbolic metric. Let us call coordinates such as \eqref{eq:coords_hyperbolic} {intrinsic hyperbolic coordinates} on $H$.

\subsection{\texorpdfstring{$\lambda$}{Lambda}-geodesic sections of the ball}

Let $B_R^d$ be a hyperbolic ball of radius $R$. We take the centre of the ball as our origin $\bo \in \HH^d$, and compute the  $(d-1)$-volume of the intersection of $B_R^d$ with a $\lambda$-geodesic hyperplane which has oriented distance $s$ to $\bo$. Observe that this is well-defined, as hyperbolic rotations about $\bo$ are transitive on such hyperplanes and preserve $B_R^d$. Let us denote such a $\lambda$-geodesic hyperplane by $H(s)$.
\begin{proposition}\label{prop:intersection}
	Fix $\lambda \in [0,1]$,  and let $s \in \RR$. Then the intersection $H(s) \cap B_R^d$ is an intrinsic ball in $H(s)$, which is non-empty if and only if $|s| \leq R$. Its volume is given by the following formulas.
	\begin{enumerate}
		\item 	If $\lambda = 1$, then
		\begin{equation}\label{eq:vol_intersection_horo}
		\cH^{d-1}(H(s) \cap B_R^d) = \kappa_{d-1} \bigl[ 2 e^s (\cosh R - \cosh s)\bigr]^{\frac{d-1}{2}}.
		\end{equation}
		\item If $0\leq \lambda < 1$, then
		\begin{equation}\label{eq:vol_intersection}
		\cH^{d-1} (H(s) \cap B_R^d) = \frac{\omega_{d-1}}{\mu^{d-1}} \int_0^{\rho(s;R)} \sinh^{d-2}u \, \dint u,
		\end{equation}
		where
		\begin{equation}\label{eq:rho}
		\rho(s;R) 
				=\arcosh \frac{ \cosh R - \sinh \Delta  \sinh(s - \Delta)}{\cosh \Delta \cosh(s-\Delta)}.
		\end{equation}
	\end{enumerate}
\end{proposition}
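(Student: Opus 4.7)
The plan is to reduce, by isometry invariance, to a direct computation in the upper half-space model of $\HH^d$. The stabiliser of $\bo$ in the isometry group acts transitively on $\lambda$-geodesic hyperplanes at fixed signed distance $s$, and the subgroup that also preserves the geodesic from $\bo$ perpendicular to $H(s)$ is a copy of $O(d-1)$ acting on $H(s)$ fixing the foot of perpendicular $f$ and transitively on intrinsic spheres around $f$. Since $H(s) \cap B_R^d$ is $O(d-1)$-invariant and connected, it must be an intrinsic metric ball in $H(s)$ centred at $f$, non-empty exactly when $|s| = d(\bo, H(s)) \leq R$. It therefore suffices to identify the intrinsic radius, and I would do so by fixing a convenient model configuration.

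For the horosphere case $\lambda = 1$, I would place $\bo = (1, 0, \ldots, 0)$ and $H(s) = \{x_1 = e^{-s}\}$: since the horoball bounded by $\{x_1 = c\}$ is $\{x_1 > c\}$, this matches the convention that $s > 0$ iff $\bo$ lies on the convex side. The standard fact that $B_R^d$ is a Euclidean ball centred at $(\cosh R, 0, \ldots, 0)$ of Euclidean radius $\sinh R$ gives that the intersection with $\{x_1 = e^{-s}\}$ is a $(d-1)$-dimensional Euclidean disk of Euclidean radius-squared $\sinh^2 R - (e^{-s} - \cosh R)^2 = (e^R - e^{-s})(e^{-s} - e^{-R})$. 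Since the induced metric on $H(s)$ equals $e^{2s}(\dint x_2^2 + \cdots + \dint x_d^2)$, multiplying by $e^{2s}$ and rearranging yields intrinsic radius-squared $2 e^s(\cosh R - \cosh s)$, from which \eqref{eq:vol_intersection_horo} is immediate.

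For $\lambda \in (0, 1)$, I would take the linear $\lambda$-geodesic hyperplane $H = \{x_1 = m x_d,\, x_d > 0\}$ (with underlying geodesic hyperplane $G = \{x_d = 0\}$) and place $\bo$ along the common perpendicular geodesic $\gamma(\tau) = (\mathrm{sech}\,\tau, 0, \ldots, 0, \tanh \tau)$ through $G$; a direct check shows that $\gamma$ meets $H$ at $\tau = \Delta$ and that $\bo$ lies on the convex side $\{x_1 > m x_d\}$ precisely when $\tau < \Delta$, giving the correct parametrisation $\tau = \Delta - s$. Writing $B_R^d$ as a Euclidean ball, substituting $x_1 = m x_d$, and passing to the intrinsic hyperbolic coordinates $v_1 = x_1/\mu$, $v_k = x_k$ for $2 \leq k \leq d-1$ from \eqref{eq:coords_hyperbolic}, a computation using $\mu^2 + \lambda^2 = 1$ and $\mathrm{sech}^2 + \tanh^2 = 1$ reduces the equation of $H(s) \cap B_R^d$ to the Euclidean sphere $(v_1 - a)^2 + v_2^2 + \cdots + v_{d-1}^2 = a^2 - 1$, with $a = \mu\,\mathrm{sech}(\Delta - s)\cosh R + \lambda \tanh(\Delta - s)$. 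In the standard upper half-space model of $\HH^{d-1}$ in the $v$-coordinates, this is exactly the hyperbolic ball of radius $\arcosh(a)$ centred at $(1, 0, \ldots, 0) = f$. The addition formulas $\cosh(s - \Delta) = (\cosh s - \lambda \sinh s)/\mu$ and $\sinh(s - \Delta) = (\sinh s - \lambda \cosh s)/\mu$ then yield $a = [\cosh R - \sinh\Delta\,\sinh(s - \Delta)]/[\cosh\Delta\,\cosh(s - \Delta)] = \cosh\rho(s; R)$. Since the intrinsic metric on $H$ is $\mu^{-2}$ times the standard hyperbolic metric in the $v$-coordinates (again by \eqref{eq:coords_hyperbolic}), intrinsic $(d-1)$-volumes carry an additional factor of $\mu^{-(d-1)}$, giving \eqref{eq:vol_intersection}. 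The case $\lambda = 0$ either reduces to the classical hyperbolic Pythagorean theorem (since $H$ is then totally geodesic) or is recovered as the limit $\lambda \to 0$ of the above.

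The main obstacle is the algebraic simplification leading to the closed form \eqref{eq:rho}, together with the careful bookkeeping of sign conventions for the signed distance and the identification of the convex side of $H$ in the chosen upper half-space realisation.
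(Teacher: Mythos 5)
Your proposal is correct and follows essentially the same route as the paper: the upper half-space model, linear representatives, and the intrinsic hyperbolic coordinates \eqref{eq:coords_hyperbolic}, the only real variation being that you fix the hyperplane through the boundary origin and slide the ball centre along the common perpendicular geodesic, whereas the paper fixes $\bo=e_1$ and parametrizes the linear hyperplanes by the offset $a=\sinh(s-\Delta)$ (Lemma \ref{lem:a}); your quantity $a=\mu\,\mathrm{sech}(\Delta-s)\cosh R+\lambda\tanh(\Delta-s)$ coincides with the paper's $q$, so the two computations agree. (One minor caveat: ``$O(d-1)$-invariant and connected $\Rightarrow$ intrinsic ball'' is not literally valid for $d\geq 3$ --- an annulus about the foot of the perpendicular is a counterexample --- but this is harmless, since your explicit computation exhibits the intersection as a Euclidean, hence hyperbolic, ball in the $v$-coordinates.)
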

In the case $\lambda \in [0,1)$, we will need more amenable bounds on the radius $\rho(s;R)$ appearing in Formula \eqref{eq:vol_intersection} above. These are given by the following result.

\begin{lemma}\label{lem:bound_rho}
	Let $\lambda \in [0,1)$ and $s\in [-R, R]$.
	Then one has the following bounds on $\rho(s;R)$:
		\begin{align}
	{\frac{\cosh (R-\Delta)}{\cosh (s-\Delta)}} & {\leq  \cosh\rho(s;R) \leq \frac{\cosh (R+\Delta)}{\cosh (s-\Delta)} }\label{eq:bound_rho_arcosh} \\
		\intertext{and}
		R  -  \Delta - |s-\Delta| & \leq \rho(s;R) \leq R + \Delta - |s-\Delta|+ \log 2. \label{eq:bound_rho}
		\end{align}
\end{lemma}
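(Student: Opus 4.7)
The plan is to establish \eqref{eq:bound_rho_arcosh} first by direct algebraic manipulation of the explicit formula \eqref{eq:rho}, and then to deduce \eqref{eq:bound_rho} from it using sharp estimates for hyperbolic functions and for $\arcosh$.

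For \eqref{eq:bound_rho_arcosh}, since $\arcosh$ is monotone on $[1,\infty)$ and $\rho(s;R) \geq 0$, it suffices to prove the corresponding inequalities after taking $\cosh$ of both sides and multiplying through by the positive factor $\cosh\Delta \cosh(s-\Delta)$ (interpreting $\arcosh$ of arguments below $1$ as the vacuous bound $0$). The resulting claim is
$$\cosh(R-\Delta)\cosh\Delta \leq \cosh R - \sinh\Delta\,\sinh(s-\Delta) \leq \cosh(R+\Delta)\cosh\Delta.$$
Expanding $\cosh(R\pm\Delta)\cosh\Delta$ via the addition formula and using $\cosh^2\Delta - 1 = \sinh^2\Delta$, each inequality reduces (after dividing by $\sinh\Delta > 0$; the case $\Delta = 0$ is immediate) to $\sinh(s-\Delta) \leq \sinh(R-\Delta)$ and $-\sinh(s-\Delta) \leq \sinh(R+\Delta)$ respectively, both of which are equivalent to $s \in [-R,R]$ by monotonicity of $\sinh$.

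For the upper bound in \eqref{eq:bound_rho}, I would combine \eqref{eq:bound_rho_arcosh} with the sharp ratio estimate $\cosh(R+\Delta)/\cosh(s-\Delta) \leq e^{R+\Delta-|s-\Delta|}$. Writing $\cosh(s-\Delta) = \cosh|s-\Delta|$ reduces this to $\cosh(b+h) \leq e^h\cosh b$ with $b = |s-\Delta| \geq 0$ and $h = R+\Delta - |s-\Delta|$, which is nonnegative for $s \in [-R,R]$; in turn, this follows from the addition formula together with $\tanh b \leq 1$ upon dividing by $\cosh b > 0$. Feeding this into the standard bound $\arcosh(e^y) = \log(e^y + \sqrt{e^{2y}-1}) \leq \log(2e^y) = y + \log 2$ yields the upper bound. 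For the lower bound, if $R-\Delta-|s-\Delta| \leq 0$ the statement is trivial since $\rho(s;R) \geq 0$; otherwise $|R-\Delta - 2|s-\Delta|| < R - \Delta$ and the product-to-sum identity
$$\cosh\bigl(R-\Delta-|s-\Delta|\bigr)\cosh(s-\Delta) = \tfrac{1}{2}\bigl[\cosh(R-\Delta) + \cosh(R-\Delta - 2|s-\Delta|)\bigr] \leq \cosh(R-\Delta)$$
yields $\cosh(R-\Delta-|s-\Delta|) \leq \cosh(R-\Delta)/\cosh(s-\Delta)$. Applying $\arcosh$ and invoking \eqref{eq:bound_rho_arcosh} then delivers the lower bound.

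The main obstacle is producing the tight constant $\log 2$ (rather than the $2\log 2$ one would obtain from bounding numerator and denominator of the $\cosh$-ratio separately via $\cosh t \leq e^{|t|}$ and $\cosh t \geq e^{|t|}/2$). This is precisely why I treat $\cosh(R+\Delta)/\cosh(s-\Delta)$ as a single object and use the sharp comparison $\cosh(b+h) \leq e^h\cosh b$, which loses only a single $\log 2$ when composed with $\arcosh(e^y) \leq y + \log 2$.
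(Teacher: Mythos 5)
Your proof is correct and follows essentially the same route as the paper: both establish \eqref{eq:bound_rho_arcosh} by reducing, via the hyperbolic addition formulas, to the condition $\sinh(s-\Delta)\in[-\sinh(R+\Delta),\sinh(R-\Delta)]$ for $s\in[-R,R]$, and then deduce \eqref{eq:bound_rho} from the estimate $b-|a|\le \arcosh\frac{\cosh b}{\cosh a}\le b-|a|+\log 2$. The only difference is that the paper imports this last estimate from \cite[Lemma 14]{HeroldHugThaele}, whereas you prove it from scratch (correctly, via $\cosh(b+h)\le e^{h}\cosh b$ for $h\ge 0$ and the product-to-sum identity).
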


Finally, when $\lambda \in [0,1)$  and $d > 2$, we will additionally need the following result about the asymptotic behaviour of the intersection volume.

\begin{lemma}\label{lem:intersection_asymp}
Let $\lambda \in [0,1)$ and $d > 2$. Then for any $s \in [-R,R]$ one has
  \begin{equation}\label{eq:inter_vol_bound}
\cH^{d-1}(H(s) \cap B_R^d) \leq \frac{\omega_{d-1}}{\mu^{d-1}} \frac{1}{d-2}  \left[\frac{\cosh (R+\Delta)}{\cosh(s-\Delta)}\right]^{d-2}.
\end{equation}
Moreover, for fixed $s \in \RR$ as $R \to \infty$,
\begin{equation}\label{eq:inter_vol_asymp}
\cH^{d-1}(H(s) \cap B_R^d) \sim \frac{\omega_{d-1}}{(d-2)2^{d-2}} \cdot   \mu^{-1} e^{(d-2)R} \cosh^{-(d-2)}(s-\Delta).
\end{equation}
\end{lemma}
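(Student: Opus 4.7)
The starting point for both claims is the formula from Proposition \ref{prop:intersection}:
$$
\cH^{d-1}(H(s)\cap B_R^d) = \frac{\omega_{d-1}}{\mu^{d-1}}\int_0^{\rho(s;R)}\sinh^{d-2} u\,\dint u,
$$
together with the bounds on $\rho(s;R)$ from Lemma \ref{lem:bound_rho}.

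For the upper bound \eqref{eq:inter_vol_bound}, the plan is to bound $\sinh^{d-2} u$ by an expression with a clean antiderivative. Since $\sinh u \leq \cosh u$ on $[0,\infty)$, I would write
$$
\sinh^{d-2} u = \sinh u \cdot \sinh^{d-3} u \leq \sinh u \cdot \cosh^{d-3} u,
$$
whose antiderivative is $\frac{1}{d-2}\cosh^{d-2} u$. This yields
$$
\int_0^{\rho(s;R)} \sinh^{d-2} u\,\dint u \leq \frac{\cosh^{d-2}(\rho(s;R)) - 1}{d-2} \leq \frac{\cosh^{d-2}(\rho(s;R))}{d-2}.
$$
Combined with the upper bound $\rho(s;R) \leq \arcosh\bigl(\cosh(R+\Delta)/\cosh(s-\Delta)\bigr)$ from \eqref{eq:bound_rho_arcosh} and monotonicity of $\cosh$ on $[0,\infty)$, this gives \eqref{eq:inter_vol_bound}.

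For the asymptotic \eqref{eq:inter_vol_asymp}, the plan is to use the relation \eqref{eq:hyper_int_asymp} that was already proved in the proof of Lemma \ref{lem:constantC}, namely $\int_0^t \sinh^{d-2}u\,\dint u \sim \frac{1}{(d-2)2^{d-2}} e^{(d-2)t}$ as $t\to\infty$, combined with a precise asymptotic expansion of $\rho(s;R)$ for fixed $s$. From \eqref{eq:rho},
$$
\cosh \rho(s;R) = \frac{\cosh R - \sinh\Delta \sinh(s-\Delta)}{\cosh\Delta \cosh(s-\Delta)} \sim \frac{e^R}{2\cosh\Delta\cosh(s-\Delta)}\qquad\text{as }R\to\infty,
$$
so using $\arcosh(x) = \log(2x) + o(1)$ as $x\to\infty$ yields
$$
\rho(s;R) = R - \log\bigl(\cosh\Delta \cosh(s-\Delta)\bigr) + o(1),
$$
and consequently
$$
e^{(d-2)\rho(s;R)} \sim \frac{e^{(d-2)R}}{\cosh^{d-2}\Delta \cdot \cosh^{d-2}(s-\Delta)}.
$$
Plugging this into the integral asymptotic and using the identity $\cosh\Delta = 1/\mu$ (which follows from $\sinh\Delta = \lambda/\mu$, $\cosh\Delta = 1/\mu$ already noted in the excerpt) gives $1/(\mu^{d-1}\cosh^{d-2}\Delta) = \mu^{-1}$ after simplification, producing exactly \eqref{eq:inter_vol_asymp}.

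The only delicate step is the asymptotic expansion of $\arcosh$; the rest is routine bookkeeping of constants. No convergence/integrability issues arise since $s$ is fixed in \eqref{eq:inter_vol_asymp}.
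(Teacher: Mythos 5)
Your proposal is correct and follows essentially the same route as the paper: bound the integral $\int_0^\rho \sinh^{d-2}u\,\dint u$ by $\cosh^{d-2}(\rho)/(d-2)$ together with the upper bound on $\rho$ from \eqref{eq:bound_rho_arcosh}, and for the asymptotics expand $\rho(s;R)=R-\log\cosh(s-\Delta)+\log\mu+o(1)$ via $\arcosh t=\log(2t)+o(1)$ and plug into \eqref{eq:hyper_int_asymp}. The only (harmless) difference is that you justify the elementary integral inequality explicitly, which the paper merely states.
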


Since the proof of Proposition \ref{prop:intersection} differs between the cases $\lambda < 1$ and $\lambda = 1$, we prove the two cases separately. We start with the proof of the simpler case $\lambda = 1$.
\begin{proof}[Proof of Proposition \ref{prop:intersection} for $\lambda = 1$]
We work, as above, in the upper half-space  model. We consider the family of linear horospheres given by $\{x_1 \equiv c\}$ with $c>0$. Such a  horosphere has distance $|\log c|$ to  the origin $\bo = e_1 = (1,0,\ldots, 0)$, and the corresponding horoball is given by $\{x_1 \geq c \}$. Therefore, in terms of the oriented distance $s \in \RR$ we have
	\begin{equation*}
	H(s) = \{x_1 = e^{-s}  \}.
	\end{equation*}
	Recall also that the induced Riemannian metric on $H(s)$ is conformally Euclidean, namely
	\begin{equation*}
	\dint s^2 \bigr|_{H(s)} = e^{2s} \bigl( \dint x_2^2 + \cdots + \dint x_d^2\bigr).
	\end{equation*}
	In particular, the volume of the intersection $H(s) \cap B_R^d$ is simply $e^{(d-1)s}$ times its Euclidean volume (in the coordinates $x_2, \ldots, x_d$). {Recall from Section \ref{subsec:upper_half_plane} that} in our model the ball $B_R^d = B(\bo, R)$ is realized as the Euclidean ball with center $\cosh R\cdot e_1$ and radius $\sinh R$. The intersection $H(s) \cap B_R^d$ is therefore a Euclidean ball of radius $\rho$, where (see the left panel of Figure \ref{fig_horosphere_section})
	\begin{equation*}
	\rho^2 = \sinh^2 R - (\cosh R - e^{-s})^2 = 2e^{-s} (\cosh R - \cosh s).
	\end{equation*}
	\begin{figure}[t]
	\begin{center}
	\includegraphics[width=0.45\columnwidth]{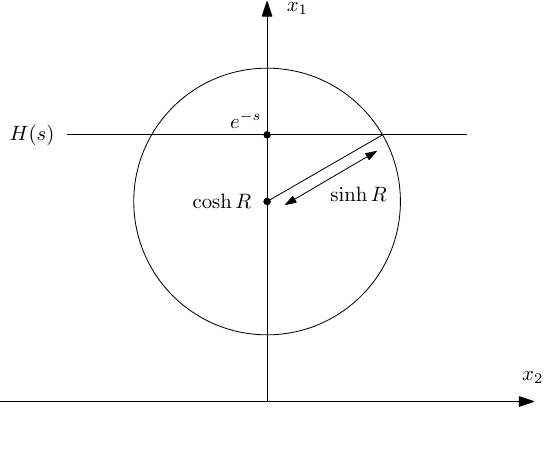}\qquad
	\includegraphics[width=0.45\columnwidth]{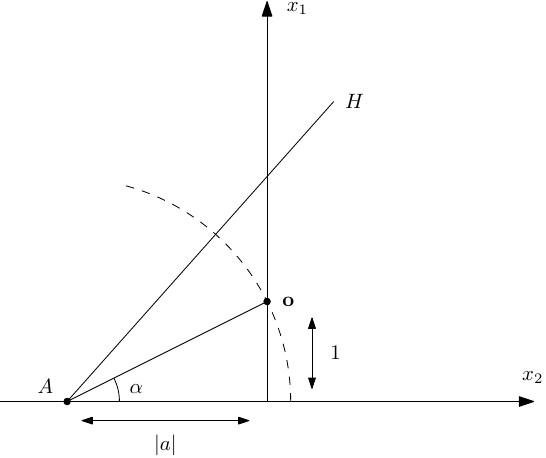}
	\caption{Left panel: Horospheric section of the ball. Right panel: A linear equidistant.}\label{fig_horosphere_section}
	\end{center}
	\end{figure}
	Evidently, the intersection is non-empty if and only if $|s| \leq R$. Finally, recalling the conformal factor, we compute the volume of the intersection as
	\begin{align*}
	\cH^{d-1}(H(s) \cap B_R^d) &= e^{(d-1)s} \cdot \kappa_{d-1} \left[2e^{-s} (\cosh R - \cosh s) \right]^{\frac{d-1}{2}}  \\
	&= \kappa_{d-1} \left[2e^{s} (\cosh R - \cosh s) \right]^{\frac{d-1}{2}},
	\end{align*}
	as asserted.
\end{proof}

Next we consider the case $\lambda < 1$.  Again it suffices to consider linear $\lambda$-geodesic hyperplanes of the form
 \begin{equation*}
H = \{x \in \HH^d \,:\, x_1 = m(x_d - a) \}.
\end{equation*}
First, let us compute the translation parameter $a$ in terms of the signed distance $s$  to $\bo = e_1 = (1,0,\ldots, 0)$ for that situation.

\begin{lemma}\label{lem:a}
	In the setting above one has $a = \sinh(s-\Delta)$.
\end{lemma}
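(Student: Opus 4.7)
The plan is to exploit the rotational symmetry around the $x_d$-axis to reduce the problem to a 2-dimensional computation in the $(x_1, x_d)$ half-plane, and then use the conformal equivalence between the hyperbolic and Euclidean metrics to compute the perpendicular foot explicitly.

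First I would reduce the dimension. The isometries of $\HH^d$ fixing $\bo$ include all rotations of the coordinates $(x_2,\ldots,x_{d-1})$, and these preserve both $\bo$ and the hyperplane $H=\{x_1=m(x_d-a)\}$. Therefore the hyperbolic geodesic from $\bo$ meeting $H$ orthogonally must lie in the fixed $(x_1,x_d)$-plane, and we may work in the upper half-plane model of $\HH^2$, with $\bo=(1,0)$ and $H$ the Euclidean ray $\{x_1=m(x_2-a)\}$, which meets the boundary at $(0,a)$ with Euclidean slope $m=\tan\theta$.

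Next, I would identify the perpendicular geodesic. Since the upper half-plane metric is conformally Euclidean, Euclidean angles equal hyperbolic angles, so we seek a hyperbolic geodesic through $\bo$ meeting $H$ orthogonally in the Euclidean sense. Hyperbolic geodesics are Euclidean semicircles centred on the boundary, and a Euclidean line is perpendicular to a circle iff it passes through the circle's centre. Since $H$ passes through $(0,a)$, the required semicircle is the one centred at $(0,a)$ and passing through $\bo$; its Euclidean radius is $r=\sqrt{1+a^2}$.

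Then I would compute the hyperbolic distance from $\bo$ to the foot of the perpendicular. Parametrising the semicircle as $(r\sin\alpha,\,a+r\cos\alpha)$ with $\alpha\in(0,\pi)$, the arclength element becomes $\dint\alpha/\sin\alpha$, so the distance between points with parameters $\alpha_1,\alpha_2$ is $|\log\tan(\alpha_2/2)-\log\tan(\alpha_1/2)|$. At $\bo$ one has $\sin\alpha_\bo=1/r$, $\cos\alpha_\bo=-a/r$, hence $\tan(\alpha_\bo/2)=1/(r-a)$. At the foot of the perpendicular the radial direction from $(0,a)$ is along $(m,1)/\sqrt{1+m^2}=(\sin\theta,\cos\theta)$, so $\tan(\alpha^*/2)=\tan(\theta/2)=e^{-\Delta}$ by the definition of $\Delta$. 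Equating the hyperbolic distance to $|s|$ yields $\sqrt{1+a^2}-a=e^{\Delta-s}$, and taking reciprocals gives $\sqrt{1+a^2}+a=e^{s-\Delta}$, whence $a=\sinh(s-\Delta)$.

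Finally, I would verify the sign convention to pin down which root. The convex side of $H$ is $\{x_1>m(x_d-a)\}$, so $\bo$ lies on this side iff $1>-ma$, i.e.\ iff $a>-1/m=-\lambda/\mu=-\sinh\Delta$; equivalently, iff $\sinh(s-\Delta)>\sinh(-\Delta)$, i.e.\ $s>0$, matching the chosen orientation. The main technical point (and the only place where care is needed) is the half-angle identification $\tan(\theta/2)=e^{-\Delta}$ together with the correct labelling of the Euclidean intersection on the convex side; everything else is routine upper-half-plane geometry.
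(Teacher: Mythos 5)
Your argument is correct and follows essentially the same route as the paper: reduce to the $(x_1,x_d)$-plane, observe that the minimizing geodesic is the semicircle centred at $A=(0,a)$, and compute the distance via $\log\tan(\alpha/2)$ together with $\tan(\theta/2)=e^{-\Delta}$. You merely spell out the ``standard computation'' that the paper delegates to a reference, and your concluding sign check via the convex side matches the paper's remark about ``taking some care with signs.''
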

\begin{proof}
	Note that the problem takes place entirely in the $(x_1,x_d)$ plane, so we can suppose that $d=2$, so that
	\begin{equation*}
	H = \{(x_1, x_2) \,:\, x_1 = m(x_2-a) \}.
	\end{equation*}
	Denote by $A = (0,a)$ the intersection point of $H$ with the $x_2$-axis, and by $\alpha {\in [0, {\pi \over 2}]} $ the {acute} angle between the line $\overline{A \bo}$ and the $x_2$-axis (see the right panel of  Figure \ref{fig_horosphere_section}).
 Clearly, the distance between $\bo$ and $H$ is attained at the geodesic represented by a semi-circle with centre $A$, and hence by a standard computation (see, e.g., \cite[Figure 24]{CannonFloydKenyonParry}) for hyperbolic distances,  one has
	\[
	|s| = \left| \log \frac{\tan\frac{\theta}{2}}{\tan \frac{\alpha}{2}} \right|.
	\]
{Recall from Section \ref{subsec:upper_half_plane} that $s >0 $ when $\bo$ lies on the convex side of $H$, which holds precisely when $\alpha > \theta$, and $s \leq 0$ otherwise. With this we deduce that}
	\[
	\tan\frac{\alpha}{2} = e^s \tan\frac{\theta}{2} = e^{s-\Delta}.
	\]
	Finally, one has $\tan \alpha = -\frac{1}{a}$   and hence
	\[
	a = -\frac{1}{\tan\alpha} = \frac{\tan^{2}\frac{\alpha}{2} - 1}{ 2 \tan \frac{\alpha}{2}} = \sinh(s-\Delta).
	\]
	This completes the proof of the lemma.
\end{proof}

\begin{proof}[Proof of Proposition \ref{prop:intersection} for $\lambda <1$]
	We work again in the upper half-space model. Let $H(s)$ be any $\lambda$-geodesic hyperplane with signed distance $s$ from $\bo$. By the previous lemma, we can take $H = \{x_1 = m(x_d-a)\}$, where $a = \sinh(s - \Delta)$. On the other hand, the ball $B_R^d$ is in our model the Euclidean ball with centre $\cosh R \cdot e_1$ and radius $\sinh R$. Then  the intersection $B_R^d\cap  H$ is given by
	$$
	(x_1 - \cosh R)^2 + x_2^2 + \ldots + x_{d-1}^2 + \left(\frac{x_1}{m} + a\right)^2 \leq \sinh^2 R,
	$$
	{which simplifies to}
	$$
	(1 + m^{-2}) x_1^2 - 2 \left(\cosh R - \frac{a}{m}\right) x_1 + x_2^2 + \ldots + x_{d-1}^2 + a^2 + 1 \leq 0.
	$$	
	{Finally, using intrinsic hyperbolic coordinates \eqref{eq:coords_hyperbolic} we get}
	\begin{equation}\label{eq:quadratic}
	v_1^2 - 2 \left(\mu \cosh R - \frac{\mu a}{m}\right) v_1 + v_2^2 + \ldots + v_{d-1}^2  + a^2 + 1  \leq 0.
	\end{equation}
	We note that this defines a Euclidean ball, and hence also a hyperbolic ball, in these coordinates. {As noted in Section \ref{subsec:upper_half_plane}}, the hyperbolic radius of such a ball, which we denote by $\rho(s;R)$, is given by one-half of the logarithm of the ratio between the maximal and minimal $v_1$-coordinates. That is,
	\begin{equation*}
	\rho(s;R) = \frac{1}{2} \log \frac{v_1^+}{v_1^-}.
	\end{equation*}
To find $v_1^\pm$, we plug $v_2 = \cdots = v_{d-1} = 0$ in \eqref{eq:quadratic} and solve the quadratic equation. This leads to the following equation, after plugging the value $a = \sinh (s - \Delta)$ and using $\frac{\mu}{m} = \lambda$:
	\[
	v_1^2 - 2 (\mu \cosh R - \lambda \sinh(s-\Delta)) v_1 + \cosh^2(s-\Delta) = 0.
	\]
	The two roots of the above equation are
	\[
	v_1^{\pm} = \biggl(\mu \cosh R - \lambda \sinh(s-\Delta)\biggr)  \left[ 1 \pm \sqrt{1 - \left(\frac{\cosh(s-\Delta)}{\mu \cosh R - \lambda \sinh(s-\Delta) }\right)^2 } \,\right]
	\]
	{(observe that the fraction inside the brackets is at most $1$, since
	\begin{align*}
	\cosh(s-\Delta) + \lambda \sinh(s-\Delta) & = \mu \big[ \cosh \Delta \cosh(s-\Delta) + \sinh \Delta \sinh(s-\Delta) \big] = \mu \cosh s \\
		& \leq \mu \cosh R,
	\end{align*}
	where we have used $\cosh \Delta = {1 \over \mu}$ and $\sinh \Delta = {\lambda \over \mu}$)}, and their ratio gives
	\begin{equation}\label{eq:rho_ugly}
		\rho(s;R) =  \frac{1}{2} \log \frac{1 + \sqrt{1-\left(\frac{\cosh(s-\Delta)}{\mu \cosh R - \lambda \sinh(s - \Delta)} \right)^2}}{1 - \sqrt{1-\left(\frac{\cosh(s-\Delta)}{\mu \cosh R - \lambda \sinh(s - \Delta)} \right)^2}}.
	\end{equation}
	To simplify the latter expression, denote
	\[
	q = \frac{\mu \cosh R - \lambda \sinh(s - \Delta)} {\cosh(s-\Delta)},
	\]
	so that \eqref{eq:rho_ugly} becomes
	\begin{equation}\label{eq:rho_q}
	 \rho(s;R) = \frac{1}{2} \log \frac{1+\sqrt{1-q^{-2}} }{1-\sqrt{1-q^{-2}}} = \log (q + \sqrt{q^2-1}) = \arcosh (q),
	\end{equation}
	where in the last equality we again used the logarithmic representation for the inverse hyperbolic cosine. Finally, using that
	\begin{equation*}
	\lambda = \tanh \Delta\qquad\text{and}\qquad \mu = \sqrt{1-\tanh^2 \Delta} = \frac{1}{\cosh \Delta},
	\end{equation*}
	we get
	\begin{equation*}
	q = \frac{\mu\cosh R - \lambda \sinh(s-\Delta)}{\cosh(s-\Delta)} =  \frac{\cosh R - \sinh \Delta \sinh(s-\Delta)}{ \cosh \Delta \cosh(s-\Delta)}.
	\end{equation*}
	Together with \eqref{eq:rho_q}, this gives the formula \eqref{eq:rho} for the hyperbolic radius $\rho(s;R)$. Finally, since the intrinsic metric is rescaled hyperbolic (recall \eqref{eq:coords_hyperbolic}), the standard formula for the volume of the hyperbolic ball (see, e.g., \cite[Theorem 3.5.3]{Ratcliffe}) gives \eqref{eq:vol_intersection}.
\end{proof}

Next, we prove the estimates for the radius $\rho$ in the case $\lambda \in [0,1)$.

\begin{proof}[Proof of Lemma  \ref{lem:bound_rho}]
	For $s \in [-R, R]$ we have $\sinh(s-\Delta) \in [-\sinh(R+\Delta), \sinh(R-\Delta)]$. This gives, using the hyperbolic identity $\cosh(x-y) = \cosh x \cosh y - \sinh x \sinh y$,
	\begin{align*}
	{\cosh R - \sinh \Delta \sinh(s-\Delta)} &\leq \cosh (R+\Delta - \Delta) + \sinh \Delta \sinh(R+\Delta) \\
	&= \cosh(R+\Delta) \cosh \Delta.
	\end{align*}
This implies the upper bound
\begin{equation}\label{eq:rho_upper_bd}
{	\cosh\rho(s;R) =  \frac{ \cosh R - \sinh \Delta  \sinh(s - \Delta)}{\cosh \Delta \cosh(s-\Delta) } \leq  \frac{\cosh(R + \Delta)}{\cosh(s-\Delta)}.}
\end{equation}
	Similarly, using the  identity $\cosh(x+y) = \cosh x \cosh y +\sinh x \sinh y$ yields the lower bound
	\begin{equation}\label{eq:rho_lower_bd}
	{	\cosh \rho(s;R)  \geq \frac{\cosh(R - \Delta)}{\cosh(s-\Delta)}.}
	\end{equation}
	Together, \eqref{eq:rho_upper_bd} and \eqref{eq:rho_lower_bd} prove \eqref{eq:bound_rho_arcosh}.
{	Finally, we note that when $s \geq -R + 2\Delta$ (whence $ \frac{\cosh (R-\Delta)}{\cosh (s-\Delta)}  \geq 1$), \eqref{eq:bound_rho_arcosh} implies
	\[
	\arcosh \frac{\cosh (R-\Delta)}{\cosh (s-\Delta)} \leq  \rho(s;R) \leq \arcosh \frac{\cosh (R+\Delta)}{\cosh (s-\Delta)}.
	\]
	 Now \eqref{eq:bound_rho} follows, for $s \geq -R + 2\Delta$, from this coupled with the following estimates for the inverse hyperbolic cosine (see \cite[Lemma 14]{HeroldHugThaele}): for $|a| \leq b$ one has
	\[
	b-|a| \leq \arcosh \frac{\cosh b}{\cosh a} \leq b-|a| + \log 2.
	\]
	In the complementary case $s < -R+2\Delta$, the right-hand-side of \eqref{eq:bound_rho} follows in a similar way, and the left-hand-side holds trivially since $R  -  \Delta - |s-\Delta| \leq 0$}. This completes the proof.
\end{proof}

Finally, let us prove Lemma \ref{lem:intersection_asymp} regarding the intersection volume.
	
	\begin{proof}[Proof of Lemma \ref{lem:intersection_asymp}]
		First we prove the upper bound. This is done by combining the upper bound on $\rho(s;R)$ provided by \eqref{eq:bound_rho_arcosh} with the elementary inequality (for $d > 2$)
		\begin{equation*}
		\int_0^\rho \sinh^{d-2} (u) \,\dint u \leq \frac{\cosh^{d-2}(\rho)}{d-2}
		\end{equation*}
		This  gives
		\begin{align*}
		\cH^{d-1}(H(s) \cap B_R^d) &\leq \frac{\omega_{d-1}}{\mu^{d-1}} \frac{1}{d-2}  \left[\frac{\cosh (R+\Delta)}{\cosh(s-\Delta)}\right]^{d-2},
		\end{align*}
		which is \eqref{eq:inter_vol_bound}. To prove \eqref{eq:inter_vol_asymp}, se apply the asymptotics
		$\arcosh t = \log(2t) + o(1)$, as $t \to \infty$,
		to the expression \eqref{eq:rho} for $\rho(s;R)$ to obtain the asymptotics, as $R \to \infty$ and for fixed $s$
		\begin{align*}
		\rho (s;R) &= \log \left(2 \frac{\cosh R - \sinh \Delta \sinh(s-\Delta)}{\cosh \Delta \cosh(s-\Delta)}\right) + o(1) \\
		& = \log(2 \cosh R) - \log \cosh(s-\Delta) - \log \cosh \Delta + o(1) \\
		& = R - \log \cosh(s-\Delta)+ \log \mu + o(1),
		\end{align*}
		where we have used that $\cosh \Delta = \mu^{-1}$.
		
		In view of the expression \eqref{eq:vol_intersection} for the intersection volume, using \eqref{eq:hyper_int_asymp} we conclude that
		\begin{align*}
		\cH^{d-1}(H(s)\cap B_R^d) &\sim {\omega_{d-1}\over \mu^{d-1}}\cdot{1\over(d-2)2^{d-2}}e^{(d-2)(R+\log\mu-\log\cosh(s-\Delta))+o(1)}\\
		&\sim \frac{\omega_{d-1}}{(d-2)2^{d-2}} \cdot  \mu^{-1} e^{(d-2)R} \cosh^{-(d-2)}(s-\Delta),
		\end{align*}
		as asserted.
\end{proof}

\section{Proofs}

\subsection{Cumulant estimates}\label{subsec:cumulants}

In the proofs below, the following integrals will play a key role
\begin{equation}
I_{\lambda,k}(R) = \int_{\Hyp_\lambda} \bigl(\cH^{d-1} (H \cap B_R^d))^k \,\Lambda_\lambda(\dint H),
\end{equation}
for $k \in \NN$ and $\lambda \in [0,1]$. Let us mention that $I_{k,\lambda}(R)$ is nothing but the $k$-th cumulant of the random variable $S_R^{(\lambda)}$ (see \cite[Corollary 1]{LPST}), although we will not need this fact, except for the simple cases $k=1,2$. We will repeatedly use the following estimates on $I_{k,\lambda}(R)$ for $k \geq 2$.

\begin{proposition}\label{prop:cumulants}
\begin{enumerate}[(i)]
	\item\label{item:cum_dim_2} Suppose that $\lambda \in [0,1)$ and  $d=2$. Then there are constants $A_k,B_k\in(0,\infty)$ for $k \geq 2$, only depending on $k$ and a constant $R_{k,\lambda}$ depending only on $k$ and $\lambda$ such that
	$$
	A_k (1-\lambda)^{-(k/2-1)}e^R \leq I_{\lambda,k}(R) \leq B_k {(1-\lambda)^{-k/2}} e^R
	$$
	holds for $R>R_{k, \lambda}$.
	
	\item Suppose that $\lambda \in [0,1)$ and $d=3$. Then there are constants $A_k \in (0,\infty)$ for $k \geq 2$, only depending on $k$,   such that as $R \to \infty$
	\begin{equation*}
	I_{\lambda,k}(R) \sim \begin{cases}
	A_k \, (1-\lambda^2)^{1-k/2} \, e^{kR} &: k > 2, \\
	A_2 \, R \, e^{2R} &: k = 2.
	\end{cases}
	\end{equation*}
	
	\item Suppose that $\lambda \in [0,1)$ and $d \geq 4$. Then there are constants $A_{k ,d}\in (0,\infty)$ for $k \geq 2$, only depending on $k$ and $d$, such that as $R \to \infty$
	\begin{equation*}
	I_{\lambda, k}(R) \sim A_{k,d} \,(1-\lambda^2)^{(d-1-k)/2} \,e^{k(d-2)R}
	\end{equation*}
	
	\item Finally, suppose that $\lambda = 1$. Then for any $d \geq 2$, there exist constant $A_{k,d} \in (0,\infty)$ for $k \geq 2$, depending only on $k$ and $d$, such that
	\begin{equation*}
	I_{1,k}(R) \sim \begin{cases}
	A_{k,d} \,e^{(k-1)(d-1)R} &: k > 2 \\
	 A_{2,d} \, R \,e^{(d-1)R} &: k = 2.
	\end{cases}
	\end{equation*}
\end{enumerate}
\end{proposition}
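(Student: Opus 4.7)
The starting point is to reduce the integral to one dimension. Using the parametrization \eqref{eq:dHlambda} of $\Lambda_\lambda$ together with the fact that $\cH^{d-1}(H(s,u)\cap B_R^d)$ depends on $(s,u)$ only through $s$, and the identity \eqref{eq:density_Delta}, one obtains
\begin{equation*}
I_{\lambda, k}(R) = C \cdot \mu^{d-1}\int_{-R}^{R} \bigl(\cH^{d-1}(H(s)\cap B_R^d)\bigr)^k \cosh^{d-1}(s-\Delta)\, \dint s,
\end{equation*}
where $C$ absorbs the integral over the unit vector. For $\lambda < 1$ I would then substitute the expression \eqref{eq:vol_intersection}, and for $\lambda = 1$ the closed form \eqref{eq:vol_intersection_horo}.

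For $\lambda \in [0,1)$ and $d \geq 3$ the plan is to change variables to $t = s-\Delta$ and rescale the integrand by $e^{-k(d-2)R}$. Combining the pointwise asymptotic \eqref{eq:inter_vol_asymp} with the uniform upper bound \eqref{eq:inter_vol_bound} (whose $e^{-(d-2)R}$-rescaling is dominated by a $\lambda$-dependent constant times $\cosh^{-(d-2)}(t)$, uniformly in $R$), dominated convergence yields
\begin{equation*}
I_{\lambda, k}(R) \sim (\text{const}) \cdot \mu^{d-1-k}\, e^{k(d-2)R} \int_{-\infty}^{\infty} \cosh^{d-1-k(d-2)}(t)\, \dint t,
\end{equation*}
whenever the right-hand integral converges, i.e.\ whenever $d-1-k(d-2) < -1$. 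This covers all of case (ii) with $k>2$ and all of case (iii), and the prefactor $\mu^{d-1-k} = (1-\lambda^2)^{(d-1-k)/2}$ gives the asserted $\lambda$-dependence. The boundary case $d = 3$, $k = 2$ (where the exponent is $0$) must be treated separately. Here I would exploit an algebraic simplification: starting from the explicit formula \eqref{eq:rho} for $\cosh\rho(s;R)$ and using $\cosh(s-\Delta) + \lambda\sinh(s-\Delta) = \mu\cosh s$, one checks $(\cosh\rho(s;R) - 1)\cosh(s-\Delta) = \mu(\cosh R - \cosh s)$, so that
\begin{equation*}
I_{\lambda, 2}(R) = \omega_2^2 \int_{-R}^{R}(\cosh R - \cosh s)^2\, \dint s \sim \tfrac{\omega_2^2}{2}\, R\, e^{2R},
\end{equation*}
which is independent of $\lambda$ and is evaluated by expanding the square and using elementary primitives.

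For $\lambda \in [0,1)$ and $d = 2$ the sharp asymptotic \eqref{eq:inter_vol_asymp} is unavailable; here $\cH^1(H(s)\cap B_R^2)$ is a scalar multiple of $\rho(s;R)/\mu$, so $I_{\lambda, k}(R) = C_k\, \mu^{1-k}\int_{-R}^{R} \rho(s;R)^k\cosh(s-\Delta)\, \dint s$, and only the two-sided bounds \eqref{eq:bound_rho} on $\rho$ can be applied. The crucial observation is the asymmetry of the weight $\cosh(s-\Delta)$ on $[-R,R]$: it attains maximum $\sim e^{R+\Delta}/2$ at $s=-R$ but only $\sim e^{R-\Delta}/2$ at $s=R$. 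The upper bound $\rho \leq R+\Delta - |s-\Delta| + \log 2$ remains positive on all of $[-R,R]$, so a Gamma-type substitution $v = R+\Delta+\log 2 - |t|$ converts the resulting integral into $\sim e^{R+\Delta}\int_0^\infty(v)^k e^{-v}\, \dint v$-type expressions, giving $I_{\lambda, k}(R) \leq B_k (1-\lambda)^{-k/2} e^R$. The lower bound $[R-\Delta-|s-\Delta|]_+$ vanishes on the high-weight region $s\in[-R,2\Delta-R]$, so the corresponding integral captures only the $e^{R-\Delta}$-endpoint, producing $I_{\lambda, k}(R) \geq A_k (1-\lambda)^{1-k/2} e^R$. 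The $(1-\lambda)^{-1/2}$ gap between the two is intrinsic to this approach and is the main technical obstacle of the proposition.

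For $\lambda = 1$ the density simplifies to $(\cosh s - \sinh s)^{d-1} = e^{-(d-1)s}$, and \eqref{eq:vol_intersection_horo} gives
\begin{equation*}
I_{1, k}(R) = \kappa_{d-1}^k\, 2^{k(d-1)/2}\int_{-R}^{R} e^{(d-1)(k/2 - 1)s} (\cosh R - \cosh s)^{k(d-1)/2}\, \dint s.
\end{equation*}
For $k > 2$, the substitution $s = R - u$ together with $\cosh R - \cosh(R-u) \sim e^R(1-e^{-u})/2$ for bounded $u$ concentrates the mass near $u = 0$, and after factoring out $e^{(k-1)(d-1)R}$ one is left with the convergent Beta-type integral $\int_0^{\infty} e^{-(d-1)(k/2-1)u}(1-e^{-u})^{k(d-1)/2}\, \dint u$. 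For $k = 2$ the exponential prefactor in $s$ drops out, and the bulk approximation $\cosh R - \cosh s \sim \cosh R \sim e^R/2$, valid for $s$ bounded away from $\pm R$, produces linear growth $\sim 2R$ in the integral, yielding $I_{1, 2}(R) \sim A_{2, d}\, R\, e^{(d-1)R}$.
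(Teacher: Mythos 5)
Your proposal is correct and follows essentially the same route as the paper: the reduction to a one-dimensional integral via \eqref{eq:dHlambda} and \eqref{eq:density_Delta}, the bounds of Lemma \ref{lem:bound_rho} driving the asymmetric upper/lower estimates for $d=2$, dominated convergence based on Lemma \ref{lem:intersection_asymp} for the higher-dimensional cases (the paper invokes this only for $d\geq 4$ and handles $d=3$, $k>2$ directly from the identity $(\cosh\rho(s;R)-1)\cosh(s-\Delta)=\mu(\cosh R-\cosh s)$ that you also derive, but your unified treatment yields the same constants), and the same endpoint/bulk analysis for $\lambda=1$. One small correction: $\int_{-\infty}^{\infty}\cosh^{d-1-k(d-2)}(t)\,\dint t$ converges precisely when $d-1-k(d-2)<0$, not $<-1$, so your stated criterion would wrongly exclude the case $d=3$, $k=3$ (exponent $-1$), which is in fact covered by the dominated convergence argument.
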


\begin{proof}
	We first suppose that $\lambda \in [0,1)$. We start with the case $d=2$.
	Observe that the formula \eqref{eq:vol_intersection} in this case reads
		\begin{equation*}
		\cH^1(H(s) \cap B_R^2) = \frac{2}{\mu} \rho(s;R),
		\end{equation*}	
		and so, using the expressions \eqref{eq:dHlambda} and~\eqref{eq:density_Delta} for the invariant measure, we have
		\begin{equation*}
		I_{\lambda, k}(R) = 2^k \mu^{1-k} \int_{-R}^{R} \rho(s;R)^k \cosh(s-\Delta) \,\dint s.
		\end{equation*}
		Let us begin with proving the upper bound. Denoting $M := R + \Delta + \log 2$, from \eqref{eq:bound_rho} we get $\rho(s;R) \leq M- |s-\Delta|$. Putting $u=s-\Delta$ and $z=M-u$, this gives
		\begin{align*}
		I_{\lambda,k}(R) & \leq 2^k \mu^{1-k} \int_{-R}^{R + 2 \Delta} (M - |s-\Delta|)^k \cosh (s-\Delta) \,\dint s \\
		& \leq 2^{k+1} \mu^{1-k} \int_{0}^{R+\Delta} (M-u)^k e^{u} \,\dint u \\
		&  \leq 2^{k+1} \mu^{1-k} e^M \int_0^\infty z^k e^{-z} \,\dint z \\
		& = 2^{k+2} k! \, \mu^{1-k} e^{\Delta} e^R.
		\end{align*}
		Finally, noting that $\mu^{1-k} e^{\Delta}  = (1 + \lambda)^{1-k/2} (1-\lambda)^{-k/2} \leq (1-\lambda)^{-k/2}$ gives the required upper bound.
		
		The proof of the lower bound is similar. Assuming $R > \Delta $, we get from \eqref{eq:bound_rho} the lower bound
		$
		\rho(s;R) \geq R  - \Delta- |s-\Delta|
		$
		(note that this lower bound is non-negative only for $|s-\Delta| \leq R - \Delta$). Therefore, proceeding as above and by using the substitutions $u=s-\Delta$ and $z=R-\Delta-u$ we get
		\begin{align*}
		I_{\lambda,k}(R) &\geq  2^k \mu^{1-k} \int_{-(R-2\Delta)}^R (R-\Delta-|s-\Delta|)^k \cosh (s-\Delta) \,\dint s  \\
		&\geq  2^{k} \mu^{1-k} \int_{0}^{R-\Delta} (R-\Delta-u)^k e^u \,\dint u \\
		& = 2^{k} \mu^{1-k} e^{-\Delta} e^R \int_0^{R-\Delta} z^k e^{-z} \,\dint z.
		\end{align*}
		Using this time $\mu^{1-k} e^{-\Delta}\geq 2^{-k/2} (1-\lambda)^{-(k/2-1)}$, it remains to choose $R_{k,\lambda}$ so that $\int_0^{R_{k,\lambda}-\Delta} z^k e^{-z} \,\dint z >\frac{1}{2}$ to obtain the lower bound. This proves $(i)$.
	
		\medskip
	
		Next consider the case $d = 3$. We note that in this case, \eqref{eq:vol_intersection} gives
		\begin{align*}
		\cH^2(H(s) \cap B_R^3) &= \frac{2\pi}{\mu^2} \int_0^{\rho(s;R)} \sinh u \,\dint u = \frac{2\pi}{\mu^2} \left(\cosh \rho(s;R)-1\right) = \frac{2\pi}{\mu}  \, \frac{\cos R - \cosh s}{ \cosh(s-\Delta)} ,
		\end{align*}
		where in the last equality we used the definition \eqref{eq:rho} of $\rho(s;R)$ together with the hyperbolic identity $\cosh(x+y) = \cosh x \cosh y + \sinh x \sinh y$ and the fact that $\cosh \Delta = \mu^{-1}$. Using the expressions \eqref{eq:dHlambda} and~\eqref{eq:density_Delta} for the invariant measure we get
		\begin{equation*}
		I_{\lambda,k} (R)=  (2 \pi)^k \mu^{2-k} \int_{-R}^R \left(\cosh R - \cosh  s\right)^k \cosh^{-(k-2)}(s-\Delta) \,\dint s.
		\end{equation*}
		When $ k > 2$, this gives
		\begin{equation*}
		I_{\lambda,k} (R) =  (2 \pi)^k \mu^{2-k} \cosh^k(R) \int_{-R}^R \left(1-\frac{\cosh s}{\cosh R}\right)^k \cosh^{-(k-2)}(s-\Delta) \,\dint s.
		\end{equation*}
		Observe that the latter integrals tend as $R \to \infty$ to a finite limit, using the monotone convergence theorem and the integrability of the function $s \mapsto \cosh^{-(k-2)}(s-\Delta)$ on $\RR$. This proves the asymptotics for $k > 2$. When $k=2$ we get
		\begin{equation}\label{eq:I_lambda_2_d_3}
		I_{\lambda,2} (R) =  (2 \pi)^2  \int_{-R}^R \left(\cosh R - \cosh  s\right)^2\dint s
		 = (2 \pi)^2 \left[  2R \cosh^2 R  - 3 \sinh R \cosh R + R  \right],
		\end{equation}
		and clearly for large $R$ the first term is dominant, proving the required asymptotics. This proves $(ii)$.
		
		\medskip
		
		Next we consider the case $d \geq 4$. Denoting $g_R^{(\lambda)}(s) := e^{-(d-2)R} \cdot  \cH^{d-1}(H(s) \cap B_R^d)$, we can write
			\begin{align}\label{eq:I_k_d_ge_4}
		I_{\lambda, k}(R) = e^{k(d-2)R} \mu^{d-1} \int_{-R}^R g_R^{(\lambda)}(s)^k \cosh^{d-1}(s-\Delta) \,\dint s.
		\end{align}
		Now, Lemma \ref{lem:intersection_asymp} implies the pointwise limit
		\begin{equation*}
		g^{(\lambda)}_R(s) \to g^{(\lambda)}(s) :=  \frac{\omega_{d-1}}{(d-2)2^{d-2}} \cdot \mu^{-1} \cosh^{-(d-2)}(s-\Delta).
		\end{equation*}
  	Furthermore, the upper bound \eqref{eq:inter_vol_bound} on the intersection volumes gives
  	\begin{equation*}
  	g^{(\lambda)}_R(s) \leq C \cosh^{-(d-2)}(s-\Delta),
  	\end{equation*}
  	where the constant $C$ depends only on $d$ and $\lambda$. This gives an integrable upper bound for the integrands in \eqref{eq:I_k_d_ge_4} (note that $d-1-k(d-2) < 0$), and so the dominated convergence theorem implies that
	\begin{equation*}
		I_{\lambda, k}(R) \sim \left( \frac{\omega_{d-1}}{(d-2)2^{d-2}} \right)^k \mu^{d-k-1} e^{k(d-2)R} \int_{-\infty}^\infty \cosh^{d-1-k(d-2)}(s) \,\dint s,
	\end{equation*}
	which proves $(iii)$.
			
		\medskip
		
		Finally, we consider the case $\lambda = 1$.  Using the expression \eqref{eq:vol_intersection_horo} for the intersection volume and noting that the invariant measure $\Lambda_1$ has density $e^{-(d-1)s}$, we have in this case
		\begin{align}
		I_{1, k} (R) = (2^{(d-1)/2}\kappa_{d-1})^k \int_{-R}^R (\cosh R - \cosh s)^{(k/2)(d-1)}e^{(k/2-1)(d-1)s}\,\dint s \label{eq:I_k_horo}
 		\end{align}
 		When $k=2$, \eqref{eq:I_k_horo} simplifies to
 		\begin{align*}
 		I_{1, 2} (R) &= 2^{d-1}\kappa_{d-1}^2 \int_{-R}^R (\cosh R - \cosh s)^{d-1}\,\dint s \\
 		    			& =2^{d-1}\kappa_{d-1}^2\cdot  \sum_{j=0}^{d-1} \binom{d-1}{j}\cosh^{d-1-j}(R) (-1)^j \int_{-R}^R \cosh^{j}(s) \,\dint s \\
  						&= 2^{d-1}\kappa_{d-1}^2 \left[2R \cosh^{d-1}(R) + \sum_{j=1}^{d-1} \binom{d-1}{j} \cosh^{d-1-j}(R) (-1)^j \int_{-R}^R \cosh^j(s) \,\dint s\right].
 		\end{align*}
 		Noting that each summand in the second sum is $O(\cosh^{d-1}(R))$ gives the desired asymptotics. When $k > 2$, we rewrite \eqref{eq:I_k_horo} as
 		\begin{equation*}
 		\begin{split}
 		I_{1, k} (R)  &=  (2^{d-1 \over 2}\kappa_{d-1})^k\,\cosh^{{k \over 2}(d-1)}(R) \,e^{ \left( {k \over 2}-1\right)(d-1)R} \\
 		&\qquad\qquad\times\int_{-R}^R \left(1 - \frac{\cosh s}{\cosh R}\right)^{{k \over 2}(d-1)}e^{ \left({k \over 2}-1\right)(d-1)(s-R)}\,\dint s.
 		\end{split}	
 		\end{equation*}
 		In the last integral, performing the substitution $z = e^{s-R}$, and using the hyperbolic identity $\cosh(x+y) = \cos x \cos y + \sinh x \sinh y$ transforms the integral into
 			\begin{align*}
 		\int_{e^{-2R}}^1 \left(1-z + (1-\tanh R) {z + 1/z \over 2}\right)^{{k \over 2} (d-1)} z^{\left({k \over 2}-1\right)(d-1)-1} \,\dint z,
 		\end{align*}
 		which by the dominated convergence theorem tends to a finite limit, specifically the Beta integral $B\big({k \over 2}(d-1) + 1, ({k \over 2}-1)(d-1)\big)$ . This completes the proof of $(iv)$, and hence of the proposition.				
	\end{proof}

\subsection{Proof of Theorem \ref{thm:exp_var}}

Here we prove Theorem \ref{thm:exp_var}. We first note that
\begin{align}
\EE S_R^{(\lambda)} &= \int_{\Hyp_\lambda} \cH^{d-1}(H \cap B_R^{d}) \, \Lambda_\lambda(\dint H) = I_{\lambda, 1} (R) \label{eq:exp_I1} \\
\intertext{and}
\Var S_R^{(\lambda)} & = \int_{\Hyp_\lambda}\bigl( \cH^{d-1}(H \cap B_R^{d}))^2\, \Lambda_\lambda(\dint H) = I_{\lambda, 2} (R).\label{eq:var_I2}
\end{align}
While these are, as mentioned before, special cases of \cite[Corollary 1]{LPST}, they are much simpler and follow from {the multivariate Mecke equation (see, e.g., \cite[Theorem 4.4]{LPBook}).}

\begin{proof}[Proof of Theorem \ref{thm:exp_var}]
Using \eqref{eq:exp_I1}, the expectation computation follows at once from the Crofton formula for $\lambda$-geodesic hyperplanes \cite[Proposition 3.1]{Sol}. In view of \eqref{eq:var_I2}, the statements about the variances are immediate consequences of Proposition \ref{prop:cumulants} (with $k=2$).
\end{proof}

\subsection{Proof of Theorem \ref{thm:NonCltlambda} (i)}\label{subsec:ProofLambdad<4}
To prove Theorem \ref{thm:NonCltlambda} (i), we apply a general normal approximation bound for Poisson $U$-statistics, which in our setting reads as follows. Recall the integrals
\begin{equation}
I_{\lambda,k}(R) = \int_{\Hyp_\lambda} \cH^{d-1} (H \cap B_R^d)^k \,\Lambda_\lambda(\dint H)
\end{equation}
introduced in Section \ref{subsec:cumulants}. Then \cite[Theorem 4.7]{RS} and \cite[Theorem 4.2]{SchulteKolmogorov} applied to $S_R^{(\lambda)}$ yield that
\begin{equation}\label{eq:Wass_bound}
d_\star\left(\frac{S_R^{(\lambda)} - \EE S_R^{(\lambda)}}{\sqrt{\Var S_R^{(\lambda)}}},N\right) \leq c_\star  \frac{\sqrt{I_{\lambda,4}(R)}}{I_{\lambda,2}(R)},
\end{equation}
where $\star\in\{\Wass,\Kol\}$, $N$ is a standard Gaussian random variable and $c_\star\in(0,\infty)$ is a constant only depending on the choice of $\star$.

\begin{proof}[Proof of Theorem \ref{thm:NonCltlambda} (i)]
	First we consider the case $d=2$. Then, combining the bound \eqref{eq:Wass_bound} with Proposition \ref{prop:cumulants} $(i)$ gives, for sufficiently large $R$,
	\begin{align*}
	d_\star\left(\frac{S_R^{(\lambda)} - \EE S_R^{(\lambda)}}{\sqrt{\Var S_R^{(\lambda)}}},N\right) &\leq c\, \frac{\sqrt{(1-\lambda)^{-2} e^R   }}{  e^R} = c\, (1-\lambda)^{-1} e^{-R/2}
	\end{align*}
	for some absolute constant $c\in(0,\infty)$. This proves Theorem \ref{thm:NonCltlambda} $(i)$ for $d=2$.
	
	Similarly, for $d=3$, we combine the bound \eqref{eq:Wass_bound} with Proposition \ref{prop:cumulants} $(ii)$ to get, for sufficiently large $R$,
	\begin{equation*}
	d_\star\left(\frac{S_R^{(\lambda)} - \EE S_R^{(\lambda)}}{\sqrt{\Var S_R^{(\lambda)}}},N\right) \leq c\, \frac{\sqrt{ (1-\lambda^2)^{-1} e^{4R}   }}{  R \, e^{2R}} \leq c\, (1-\lambda)^{-\frac{1}{2}} R^{-1},
	\end{equation*}
	for some absolute constant $c\in(0,\infty)$. This completes the proof of Theorem \ref{thm:NonCltlambda} $(i)$.
\end{proof}

\subsection{Proof of Theorem \ref{thm:NonCltlambda} (ii)}\label{sec:ProofLambdad>4}

Since the overall strategy of the proof is the same as that of Theorem \ref{thm:NonClt}, we only point out the essential differences. As in the beginning of the proof of Theorem \ref{thm:NonClt}, we can compute the characteristic function of the centred and normalized random variable $S_R^{(\lambda)}$.
Using the expression~\eqref{eq:dHlambda} for the invariant measure combined with~\eqref{eq:density_Delta}, we have that
\begin{align*}
	\psi_R^{(\lambda)}(t):=\EE e^{\mathfrak{i}t{S_R^{(\lambda)}-\EE S_R^{(\lambda)}\over e^{R(d-2)}}} = \exp\Big(\mu^{d-1}\int_{-R}^R(e^{\mathfrak{i}tg_R^{(\lambda)}(s)}-1-{\mathfrak{i}tg_R^{(\lambda)}(s)})\cosh^{d-1} (s-\Delta)\,\dint s\Big)
\end{align*}
for $t\in\RR$ with
$$
g_R^{(\lambda)}(s):=e^{-R(d-2)}\cH^{d-1}(H(s)\cap B_R^d)
$$
and where $H(s)$ stands for a $\lambda$-geodesic hyperplane at oriented distance $s$ from the origin $\bo$.

From Lemma \ref{lem:intersection_asymp}, we know that for fixed $s$, one has
\begin{equation*}
\lim_{R\to\infty} g_R^{(\lambda)} (s) =  g^{(\lambda)} (s) :=\frac{\omega_{d-1}}{(d-2)2^{d-2}} \cdot  \mu^{-1} \cosh^{-(d-2)}(s-\Delta).
\end{equation*}
From this point on the proof of Theorem \ref{thm:NonCltlambda} (ii) is the same as that of Theorem \ref{thm:NonClt} with the obvious modifications. An integrable upper bound is obtained from the upper bound \eqref{eq:inter_vol_bound} on the intersection volume, which implies that
\begin{equation*}
g^{(\lambda)}_R(s) \leq C \cosh^{-(d-2)}(s-\Delta),
\end{equation*}
where the constant $C$ depends only on $d$ and $\lambda$.
Using this, we get
\begin{align*}
\left|(e^{\mathfrak{i}tg_R^{(\lambda)}(s)}-1-\mathfrak{i}tg_R^{(\lambda)}(s))\cosh^{d-1} (s-\Delta)\right| &\leq \frac{t^2}{2} g_R^{(\lambda)}(s)^2 \cosh^{d-1}(s-\Delta)\\
& \leq C^2 \frac{t^2}{2} \cosh^{-(d-3)}(s-\Delta),
\end{align*}
which is integrable on $\RR$ for $d \geq 4$. Therefore, we have the convergence
\begin{align*}
\lim_{R\to\infty}\psi_R^{(\lambda)}(t)
= \psi(t) :=
\exp\Big(\mu^{d-1}\int_{-\infty}^\infty(e^{\mathfrak{i}tg^{(\lambda)}(s)}-1-\mathfrak{i}tg^{(\lambda)}(s))\cosh^{d-1}(s-\Delta)\,\dint s\Big)
\end{align*}
of the characteristic functions, for all $t\in\RR$. Using the substitution $w= s-\Delta$ and the notation $h(w) = \cosh^{-(d-2)}(w)$, $w>0$, we can write the above as follows:
$$
\psi\left(\frac{(d-2)2^{d-2}\mu}{\omega_{d-1}} t\right)
=
\exp\Big( 2 \mu^{d-1} \int_0^{\infty}(e^{\mathfrak{i}t h(w)}-1-\mathfrak{i}t h(w))\cosh^{d-1}(w)\,\dint w\Big).
$$
As in the proof of Theorem \ref{thm:NonClt}, we can see that the right-hand side is the characteristic function $\EE e^{\mathfrak{i} t Z_{d,\lambda}}$ of the random variable $Z_{d,\lambda}$ appearing in Theorem~\ref{thm:NonCltlambda} (ii). This completes the argument.\qed

\subsection{Proof of Theorem \ref{thm:CLT_horosphere}}\label{sec:ProofHoro}

We start by recalling the variance of the random variables $S^{(1)}_R$ and its  asymptotic behaviour, as $R\to\infty$. Indeed, since by \eqref{eq:var_I2},
\begin{equation*}
\Var S_R^{(1)} = I_{1,2}(R),
\end{equation*}
we deduce from Proposition \ref{prop:cumulants} $(iv)$ (and its proof) that
\begin{equation}\label{eq:var_hor_exact}
\Var S_R^{(1)} = 2^{d-1} \kappa_{d-1}^2 \int_{-R}^R (\cosh R - \cosh s)^{d-1} \,\dint s
\end{equation}
and moreover,
\begin{equation}\label{eq:var_hor_asymp}
\Var S_R^{(1)}  \sim 2\kappa_{d-1}^2 R e^{(d-1)R}, \quad \text{ as $R\to\infty$.}
\end{equation}

%
%

We are now in a position to present the proof of Theorem \ref{thm:CLT_horosphere}.

\begin{proof}[Proof of Theorem \ref{thm:CLT_horosphere}]
In view of Proposition \ref{prop:intersection}, we have the representation
\begin{equation*}
S_R^{(1)} = \sum_{s \in \xi} f_R^{(1)}(s),
\end{equation*}
where $\xi$ is an inhomogeneous Poisson process on $\RR$ with density $s \mapsto (\cosh s - \sinh s)^{d-1} = e^{-(d-1)s}$, and the function $f_R^{(1)}$ is defined by $f_R^{(1)}(s) = \kappa_{d-1} \bigl[ 2 e^s (\cosh R - \cosh s)\bigr]^{\frac{d-1}{2}}$ for $|s| \leq R$, and $f_R^{(1)}(s) =0$ otherwise. We decompose the random variable $S_R^{(1)}$ into `positive' and `negative' parts as follows:
\begin{equation*}
S_R^{(1)} = S_{R,+}^{(1)} + S_{R,-}^{(1)},
\end{equation*}
where
\begin{equation*}
S_{R, +}^{(1)} = \sum_{\substack{s \in \xi \\  s > 0}} f_R^{(1)}(s) \qquad \text{and} \qquad S_{R, -}^{(1)} = \sum_{\substack{s \in \xi \\  s < 0}} f_R^{(1)}(s).
\end{equation*}
Observe that $ S_{R,+}^{(1)}$ and $S_{R,-}^{(1)}$ are independent, due to the independence properties of a Poisson process. Denoting $\sigma_R^2 := \Var S_R^{(1)}$, we may then write
\begin{equation*}
\frac{S_R^{(1)} - \EE S_R^{(1)} }{\sqrt{\Var S_R^{(1)} }} = \frac{S_{R,+}^{(1)} - \EE S_{R,+}^{(1)}}{\sigma_R} + \frac{S_{R,-}^{(1)} - \EE S_{R,-}^{(1)}}{\sigma_R}.
\end{equation*}
Our plan is now to prove that the first summand converges to zero in distribution, while the second converges to the required Gaussian random variable $N_{1\over 2}$.
This suffices to prove the theorem and explains the non-standard variance $1/2$: both terms, $S_{R,+}^{(1)}$ and $S_{R,-}^{(1)}$, contribute to the variance, but only one of these terms contributes to the fluctuations because the other one is killed by the normalization $\sigma_R$.

We begin with the first summand. We first compute
\begin{align*}
\EE S_{R,+}^{(1)} =& \kappa_{d-1} \int_0^{R} \left[2e^s (\cosh R - \cosh s)\right]^{\frac{d-1}{2}}e^{-(d-1)s}\,\dint s \\
						  \leq &\kappa_{d-1} (2 \cosh R)^{\frac{d-1}{2}} \int_{0}^R e^{-\frac{d-1}{2}s} \,\dint s \\
						  	\leq &\kappa_{d-1} 2^{\frac {d+1}{2}} e^{\frac{d-1}{2}R}.
\end{align*}
Therefore, for any $\eps > 0$ we get
\begin{align*}
\PP \left(  \left|\frac{S_{R,+}^{(1)} - \EE S_{R,+}^{(1)}}{\sigma_R} \right| \geq \eps  \right) \leq \frac{\EE |S_{R,+}^{(1)} - \EE S_{R,+}^{(1)}|}{\eps \sigma_R} \leq \frac{4 \kappa_{d-1}}{\eps}\cdot \frac{e^{\frac{d-1}{2}R } }{\sigma_R} \xrightarrow[R \to \infty]{}0,
\end{align*}
by \eqref{eq:var_hor_asymp}. Therefore $\frac{S_{R,+}^{(1)} - \EE S_{R,+}^{(1)}}{\sigma_R}$ converges to zero in probability and, in particular, in distribution.

Now we turn to the second summand. Its characteristic function is given by
\begin{equation*}
\psi_R(t)= \EE \left[\exp\left({\mathfrak{i}t\,\frac{S_{R,-}^{(1)} - \EE S_{R,-}^{(1)}}{\sigma_R}}\right)\right]
= \exp \left(\int_{-R}^0\left(e^{\mathfrak{i} t g_R^{(1)}(s)} - 1 - \mathfrak{i} tg_R^{(1)}(s)\right) e^{-(d-1)s}\,\dint s\right),
\end{equation*}
where $g_R^{(1)}(s) := f_R^{(1)}(s) / \sigma_R$. Using Taylor expansion (see, e.g., \cite[Lemma 6.15]{KallenbergVol1}), we have
\begin{equation*}
e^{\mathfrak{i} t g_R^{(1)}(s)} - 1 - \mathfrak{i} tg_R^{(1)}(s) = -\frac{t^2}{2} g_R^{(1)}(s)^2 + E_R(s,t),
\end{equation*}
where the error term $E_R(s,t)$ satisfies the estimate
\begin{equation}
|E_R(s,t)| \leq \frac{|t|^3}{3!} g_R^{(1)}(s)^3.
\end{equation}
Therefore
\begin{align*}
\psi_R(t) = \exp\left(-\frac{t^2}{2}\, \int_{-R}^{0}g_R^{(1)}(s)^2 e^{-(d-1)s} \,\dint s\right) \exp\left(\int_{-R}^0 E_R(s,t) e^{-(d-1)s} \,\dint s \right).
\end{align*}
Consider the first term. By symmetry of the integrand and \eqref{eq:var_hor_exact}, we have
\begin{align*}
\int_{-R}^{0}g_R^{(1)}(s)^2 e^{-(d-1)s} \,\dint s
&= \phantom{\frac{1}{2}} \sigma_R^{-2} 2^{d-1} \kappa_{d-1}^2 \int_{-R}^0 (\cosh R -\cosh s)^{d-1} \,\dint s \\
&= \frac{1}{2} \sigma_R^{-2} 2^{d-1}\kappa_{d-1}^2 \int_{-R}^R (\cosh R -\cosh s)^{d-1} \,\dint s  {=} \frac{1}{2}
\end{align*}
{for all $R \geq 0$}, so that
\begin{equation*}
\lim_{R\to\infty} \exp\left(-\frac{t^2}{2}\, \int_{-R}^{0}g_R^{(1)}(s)^2 e^{-(d-1)s} \,\dint s\right)= e^{-t^2/4}.
\end{equation*}
As for the second term,  we first observe that, in view of \eqref{eq:var_hor_asymp}, we have for large enough $R>0$,
\begin{equation*}
g_R^{(1)}(s) \leq C \frac{e^{\frac{d-1}{2}(R+s)} }{\sigma_R} \leq  C  R^{-1/2} {e^{\frac{d-1}{2}s} },
\end{equation*}
for a constant $C>0$ depending only on $d$ (and changing from equation to equation). It follows that,  again for large enough $R$,
\begin{equation*}
|E_R(s,t)| \leq C |t|^3 R^{-3/2}  {e^{\frac{3}{2}(d-1)s} }.
\end{equation*}
Thus,
\begin{align*}
\left|\int_{-R}^0 E_R(s,t) e^{-(d-1)s}\,\dint s\right| & 
\leq C |t|^3 R^{-3/2 }\int_{-R}^0  e^{\frac{d-1}{2}s} \,\dint s  \leq C |t|^3 R^{-3/2}.
\end{align*}
In particular,
\begin{equation*}
\lim_{R\to\infty} \exp\left(\int_{-R}^0 E_R(s,t) e^{-(d-1)s} \,\dint s \right) = 1.
\end{equation*}
From these considerations we conclude that
\begin{equation*}
\lim_{R\to\infty} \psi_R(t) = e^{-t^2/4},
\end{equation*}
and the latter is the characteristic function of the desired centred Gaussian random variable $N_{1\over 2}$ with variance $1/2$. As noted above, this completes the proof.
\end{proof}

\section{Comparison with the Random Energy Model}\label{sec:REM}
The Random Energy Model (REM), introduced by Derrida~\cite{Derrida0,Derrida}, is probably the simplest model of a disordered system~\cite{BovierBook}. The partition function of the REM at inverse temperature $\beta>0$ is given by
$$
Z_n(\beta) = \sum_{j=1}^N e^{\beta \sqrt n X_j},
$$
where $X_1,X_2,\ldots$ are independent standard Gaussian random variables, $n$ is the number of spins in the system (each spin taking values $\pm 1$), and $N= 2^n$ is the number of spin configurations. The asymptotic behavior of $Z_n(\beta)$ as $n\to\infty$ is very well understood; see~\cite{BovierKurkovaLoewe}, \cite[Chapter~9]{BovierBook}, \cite{cranston_molchanov} and the references there.  In particular, a complete description of possible limit distributions of the appropriately centred and  normalized random variable $Z_n(\beta)$, as $n\to\infty$, was obtained in~\cite[Theorems~1.5, 1.6]{BovierKurkovaLoewe}. It turns out that for $0 < \beta < \frac 1 2 \sqrt{2\log 2}$, a central limit theorem holds for $Z_n(\beta)$, while for $\beta >\frac 12 \sqrt{2\log 2}$, the appropriately centred and normalized $Z_n(\beta)$ converges to a totally skewed $\alpha$-stable distribution with $\alpha = \sqrt{2\log 2}/ \beta$. The $\alpha$-stable distribution can be represented as a sum over a Poisson
process (in this form it appears in~\cite{BovierKurkovaLoewe}) and is similar to what we see in Theorems~\ref{thm:NonClt} and~\ref{thm:NonCltlambda} (ii).
Finally, in the boundary case $\beta = \frac 12 \sqrt{2\log 2}$, there is a CLT-type result, but the variance of the limiting normal distribution is $1/2$ rather than $1$, as in Theorem~\ref{thm:CLT_horosphere}.

This similarity between the REM and the model studied in the present paper requires an explanation.
In this section we shall give a non-rigorous treatment of both models explaining the analogy.
Before we start, several remarks are in order.
Suppose we have a sequence of random variables $(X_n)_{n\in \NN}$ such that $(X_n - a_n)/b_n$ converges to some non-degenerate limit distribution, as $n\to\infty$. Then, we say that $b_n$ is the order of fluctuations of $X_n$. It should be stressed that the order of fluctuations is not the same as the standard deviation of $X_n$ (even if the latter exists).
We shall work with exponentially growing or decaying quantities of the form $e^{c n + o(n)}$ (for the REM) and $e^{cR+o(R)}$ (for $\lambda$-geodesic hyperplanes), where $c$ stays constant. If the sequences of random variables $(X_n)_{n\in \NN}$ and $(Y_n)_{n\in \NN}$ have fluctuations of orders $e^{an}$ and $e^{bn}$ with $a>b$, respectively,  then the order of fluctuations of  $X_n + Y_n$ is $e^{an}$ (i.e., the largest exponent wins), by the Slutsky lemma.

\paragraph{Random Energy Model.}
Let us analyze the fluctuations of $Z_n(\beta)$ in a non-rigorous way. For a better comparison with $\lambda$-geodesic hyperplanes it will be convenient to poissonise the REM by letting $N\sim \text{Poi}(2^n)$ be a Poisson random variable with parameter $2^n$ that is independent of $X_1,X_2,\ldots$. Then, the points $\sqrt n X_1,\ldots, \sqrt n X_N$ form a Poisson process on $\RR$ with intensity $\lambda_n(x) = 2^n \sqrt{2\pi n}^{-1} e^{-x^2/(2n)}$, $x\in \RR$.  The property of this intensity which is crucial for us is that at $an$, where $|a| < \sqrt{2\log 2}$, the intensity \textit{grows} exponentially in $n$, namely
\begin{equation}\label{eq:REM_intensity}
\lambda_n(a n) = e^{(\log 2 - \frac {a^2} 2)n + o(n)}.
\end{equation}
Actually, the same formula holds for all $a\in \RR$, but for $|a| > \sqrt{2\log 2}$ the density \textit{decays} exponentially meaning that there are no points of the Poisson process corresponding to such $a$'s.
Let us take some $a$ with $|a| < \sqrt{2\log 2}$ and consider all points from the set $\sqrt n X_1,\ldots, \sqrt n X_N$ falling into the interval $[an, an+1]$. Any such point contributes approximately $e^{\beta an}$ to the sum $Z_n(\beta)$. The number of such points is Poisson distributed and the parameter is approximately $e^{n(\log 2 - a^2/2)}$. Since $(\text{Poi}(\mu) - \mu)/\sqrt \mu$ converges to the standard normal distribution as $\mu\to\infty$, the fluctuations of this random number are of order $e^{n(\log 2 - a^2/2)/2}$.  It follows that the contribution of those of the points $\sqrt n X_1,\ldots, \sqrt n X_{N}$ that fall into the interval $[an, an+1]$ to the fluctuations of $Z_n(\beta)$ is of order
\begin{equation}\label{eq:REM_contribution_fluct}
e^{a\beta n}e^{n(\log 2 - a^2/2)/2} = e^{\frac 12 n(2a\beta + \log 2 - a^2/2)},
\qquad
|a| <  \sqrt{2\log 2}.
\end{equation}
To guess the order of fluctuations of $Z_n(\beta)$, we have to take the maximum of these quantities over $a$ (because the larger exponent wins).  The crucial point is that the maximum should be taken only over $|a| \leq  \sqrt{2\log 2}$ because outside this interval the intensity decays exponentially and no points will be observed there, with high probability. (In contrast to this, to determine the order of the \textit{standard deviation} of $Z_n(\beta)$, it would be necessary to take into account the contribution of all real $a$).  Taking the maximum of~\eqref{eq:REM_contribution_fluct} over $|a|\leq \sqrt{2\log 2}$, we see that three cases are possible.

\vspace*{2mm}
\noindent
\textit{Case 1}. If $0 < \beta < \frac 1 2 \sqrt{2\log 2}$, then the maximum is attained at $a  = 2\beta < \sqrt{2\log 2}$. That is, the main contribution to the fluctuations of $Z_n(\beta)$ comes from the points of the Poisson process located near $2\beta n$ (more precisely, at distances $c\sqrt n$ from this point, $c\in \RR$). As already explained above, this contribution is asymptotically Gaussian. In this case, $Z_n(\beta)$ satisfies a central limit theorem.

\vspace*{2mm}
\noindent
\textit{Case 2}. If $\beta >\frac 12 \sqrt{2\log 2}$, then the maximum is attained at $a= \sqrt{2\log 2}$. This means that the main contribution to the fluctuations of $Z_n(\beta)$ comes from the upper order statistics of the sample $\sqrt n X_1,\ldots, \sqrt n X_N$ (i.e., the maximum, the second largest value, etc.)   It is well known~\cite[Corollary~4.19(iii)]{Resnick_book} that, after an appropriate centering, these upper order statistics converge to the Poisson point process on $\RR$ with intensity $e ^{-x}$, $x\in \RR$. The limiting distribution of $Z_n(\beta)$ can be expressed as a sum over this Poisson process, see~\cite{BovierKurkovaLoewe}.

\vspace*{2mm}
\noindent
\textit{Case 3}. If $\beta = \frac 12 \sqrt{2\log 2}$, then the maximum is still attained at $a= \sqrt{2\log 2}$, but this time the sum involving the Poisson process diverges. Refining the above analysis, it is possible to show that the main contribution to the fluctuation comes from the intermediate order statistics, more precisely, from the points $\sqrt n X_j$ located at $\sqrt {2\log 2}\, n - c \sqrt n$ with $c>0$ (recall that there are no points corresponding to $c<0$). On the other hand, the main contribution to the standard deviation comes from the analogous  \textit{two-sided} region with $c>0$ and $c<0$, explaining why the variance of the limiting distribution is $1/2$.  We have seen a similar phenomenon in Section~\ref{sec:ProofHoro}.

\paragraph{$\lambda$-geodesic hyperplanes.}
Let us now provide a similar non-rigorous analysis of the fluctuations of the random variable $S_R^{(\lambda)}$ defined in~\eqref{eq:S_R_lambda_def}. We restrict ourselves to the case $\lambda \in [0,1)$. By the formula~\eqref{eq:dHlambda} for the invariant measure and by~\eqref{eq:density_Delta}, the signed distances from the origin to the $\lambda$-geodesic hyperplanes form a Poisson process with intensity $\lambda(s) = \mu^{d-1} \cosh^{d-1}(s-\Delta)$, $s\in \RR$. Take some  $a\in (-1,1)$. Then, the intensity at $aR$  grows exponentially,  as $R\to\infty$,  namely
$$
\lambda(aR) = e^{(d-1)|a|  R + o(R)},
\qquad a\in (-1,1).
$$
This is similar to~\eqref{eq:REM_intensity}. If $H$ is a $\lambda$-geodesic hyperplane having signed distance $aR$ to the origin, then its contribution to $S_R^{(\lambda)}$ equals
$$
f_R(aR) = \cH^{d-1}(H\cap B_R^d) = \frac{\omega_{d-1}} {\mu^{d-1}} \int_0^{\rho(aR;R)} \sinh^{d-2} u\, \dint u.
$$
A simple computation using~\eqref{eq:rho} (and noting that $\mu\neq 0$ since we assume $\lambda\neq 1$) or Lemma~\ref{lem:bound_rho} (and noting that $\Delta$ is finite since $\lambda\neq 1$) shows that $\rho(aR;R) = (1-|a|)R + o(R)$ and hence,
$$
f_R(aR)
=
\begin{cases}
e^{(d-2)(1-|a|)R + o(R)}, & \text{ if } d\geq 3,\\
\frac 2 \mu (1-|a|)R + o(R), & \text{ if } d=2.
\end{cases}
$$
Let us now look at the contribution of the $\lambda$-geodesic hyperplanes whose distances to the origin are in the interval $[aR, aR+1]$ to the to the fluctuations of $S_R^{(\lambda)}$.
The number of such hyperplanes is Poisson distributed with parameter approximately $e^{(d-1)|a|R}$. Therefore, for $d\geq 3$, this contribution equals
$$
e^{(d-1)|a|R/2 + (d-2)(1-|a|)R} =e^{(3-d)|a|R/2 + (d-2)R}.
$$
Recalling that the largest exponent wins, we take the maximum over $a\in (-1,1)$. For $d\geq 4$, the maximum is attained at $a=0$ meaning that the main contribution to the fluctuations comes from hyperplanes having distances of order $O(1)$ to the origin. This is the reason why the Poisson process with intensity $\cosh^{d-1}(u)$ (describing the lower order statistics of the distances to the origin) appears in Theorem~\ref{thm:NonCltlambda} (ii).
{       For $d=3$, the contribution does not depend on $a$ and, indeed, an inspection of the proof of Proposition~\ref{prop:cumulants} (ii) (see, in particular, \eqref{eq:I_lambda_2_d_3}) shows that all points $aR$, $-1 <a < 1$, contribute to the fluctuations.   }
Finally, for $d=2$, the function $f_R(aR)$ grows subexponentially and therefore the contribution equals approximately $\lambda(aR) = e^{|a|  R }$. The maximum is attained at $a= \pm 1$. Since the number of hyperplanes corresponding to $a=\pm 1$ is Poissonian and the parameter grows exponentially in $R$, the limiting fluctuations are normal; see Theorem~\ref{thm:NonCltlambda} (i).


\subsection*{Acknowledgement}

The authors wish to thank Andreas Bernig (Frankfurt) for motivating us to study Poisson processes of horospheres and more general $\lambda$-geodesic hyperplanes, and the anonymous referees for useful remarks and corrections, which improved the style and clarity of the presentation.

DR and CT were supported by the German Research Foundation (DFG) via CRC/TRR 191 \textit{Symplectic Structures in Geometry, Algebra and Dynamics}. ZK and CT were supported by the German Research Foundation (DFG) via the Priority Program SPP 2265 \textit{Random Geometric Systems}. ZK was also supported by the German Research Foundation (DFG) under Germany’s Excellence Strategy EXC 2044 – 390685587 \textit{Mathematics M\"unster: Dynamics -- Geometry -- Structure}.

\addcontentsline{toc}{section}{References}

\end{document}